\newtheoremstyle{kai}
{3pt}{3pt}{}{}{\bfseries}{.}{.5em}{}
\def\EquationsBySection{\def\theequation
{\thesection.\arabic{equation}}%
\@addtoreset{equation}{section}}
\newcommand\old[1]{}
\newcommand{\pend}{\hfill \thicklines \framebox(6.6,6.6)[l]{}}
\renewenvironment{proof}{\noindent {\it  Proof.} \rm}{\pend}
\newtheorem{theorem}{Theorem}[section]
\newtheorem{lemma}{Lemma}[section]
\newtheorem{corollary}{Corollary}[section]
\newtheorem{proposition}{Proposition}[section]
\newtheorem{remark}{Remark}[section]
\newtheorem{definition}{Definition}[section]
\begin{document}
\pagestyle{plain}
\title
{\bf On Regularity Property of Retarded Ornstein-Uhlenbeck Processes\\  in Hilbert Spaces}
\author{
Kai Liu\\
\\
 \small{Division of Statistics and Probability,}\\
\small{Department of Mathematical Sciences,}\\
\small{The University of Liverpool,}\\
\small{Peach Street, Liverpool, L69 7ZL, U.K.}\\
\small{E-mail: k.liu@liv.ac.uk}\\}

\date{}
\maketitle

\noindent {\bf Abstract:} In this work, some regularity properties of mild solutions for a class of stochastic linear  functional differential equations driven by infinite dimensional Wiener processes are considered. In terms of retarded fundamental solutions, we introduce a class of stochastic convolutions which naturally arise in the solutions and investigate their Yosida approximants.  By means of the retarded fundamental solutions, we find conditions under which each mild solution permits a continuous modification.
With the aid of Yosida approximation, we study two kinds of regularity properties, temporal and spatial ones, for the retarded solution processes.
 By employing a factorization  method, we establish a retarded version of Burkholder-Davis-Gundy's inequality for stochastic convolutions.        

\vskip 50pt
\noindent {\bf Keywords:}  Fundamental solution; Yosida approximation;
Burkholder-Davis-Gundy's inequality.

\noindent{\bf 2000 Mathematics Subject Classification(s):} 60H15, 60G15, 60H05.

\newpage

\section{Introduction}
Let $H$ and $K$ be two real separable Hilbert spaces with associated inner products $\langle\cdot, \cdot\rangle_H$,  $\langle\cdot, \cdot\rangle_K$ and norms $\|\cdot\|_H$, $\|\cdot\|_K$, respectively. We denote by ${\mathscr L}(K, H)$ the set of all linear bounded operators from $K$ into $H$, equipped with the usual operator norm $\|\cdot\|$ topology. When $H=K$, we denote ${\mathscr L}(H, H)$ simply by ${\mathscr L}(H)$.

Let $\{\Omega, {\mathscr F}, {\mathbb P}\}$ be a
complete probability space equipped with some filtration $\{\mathscr{F}_{t}\}_{t\geq 0}$ satisfying the usual
     conditions, i.e., the filtration is right continuous and
      $\mathscr{F}_{0}$ contains all ${\mathbb P}$-null sets. 
Let  $\{W(t),\,t\ge 0\}$ denote a $K$-valued $\{\mathscr{F}_{t}\}_{t\geq 0}$-Wiener process defined on  $\{\Omega, {\mathscr F}, {\mathbb P}\}$ with covariance operator $Q$, i.e.,
\[
\mathbb{E}\langle W(t), x\rangle_K\langle W(s), y\rangle_K = (t\wedge s)\langle Qx, y\rangle_K\,\,\,\,\hbox{for all}\,\,\,\,\, x,\,\,y\in K,\]
where $Q$ is a linear, symmetric and nonnegative bounded operator on $K$. In particular, we shall call $W(t)$, $t\ge 0$, a $K$-valued $Q$-Wiener process with respect to $\{\mathscr{F}_{t}\}_{t\geq 0}$. If the trace  Tr$\,Q<\infty$, then $W$ is a genuine Wiener process. It is possible that Tr$\,Q=\infty$, e.g., $Q=I$ which corresponds to a cylindrical Wiener process.

In order to define stochastic integrals with respect to the $Q$-Wiener process $W(t)$, we introduce the subspace $K_Q=\hbox{Ran}\,Q^{1/2}\subset K$, the range of $Q^{1/2}$, which is a Hilbert space endowed with the inner product
\[
\langle u, v\rangle_{K_Q} =\langle Q^{-1/2}u, Q^{-1/2}v\rangle_K\hskip 10pt\hbox{for any}\hskip 10pt  u,\,\,v\in K_Q.\]
Let ${\mathscr L}_2(K_Q, H)$ denote the space of all Hilbert-Schmidt operators from $K_Q$ into $H$, then ${\mathscr L}_2(K_Q, H)$ turns out to be a separable Hilbert space under the inner product
\[
\langle L, P\rangle_{{{\mathscr L}_2(K_Q, H)}} =Tr [L QP^*]\hskip 15pt \hbox{for any}\,\,\,\,L,\,\,\,P\in {\mathscr L}_2(K_Q, H).\]
For arbitrarily given
$T\ge 0$, let
$J(t,\omega)$, $t\in[0,T]$, be an ${\mathscr L}_2(K_Q, H)$-valued process. We define the following norm for arbitrary $t\in[0,T]$,
\begin{equation}
\label{11/02/09(10)}
|J|_t :=\biggl\{\mathbb{E}\int^t_0 Tr\Big[J(s,\omega)QJ(s,\omega)^*\Big]ds\biggr\}^{\frac{1}{2}}.
\end{equation}
In particular, we denote all ${\mathscr L}_2(K_Q, H)$-valued measurable processes $J$, adapted to the filtration $\{{\mathscr F}_t\}_{t\le T}$, satisfying $|J|_T <
\infty$ by ${\cal U}^2\big([0,T]; \,{\mathscr L}_2(K_Q, H) \big)$.
The stochastic integral $\int^t_0 J(s, \omega)dW(s) \in H$, $t\ge 0$, may be defined
 for all $J\in {\cal U}^2([0,T];\, {\mathscr L}_2(K_Q, H))$ by
\[
\int^t_0 J(s, \omega)dW(s) = L^2 - \lim_{n\rightarrow \infty}
\sum^n_{i=1} \int^t_0 \sqrt{\lambda_i}J(s, \omega)e_i dB^i_s,\hskip 15pt
t\in [0, T],
\]
where $W(t) = \sum^\infty_{i=1}\sqrt{\lambda_i}B^i_t e_i$. Here $(\lambda_i\ge 0,\,i\in \mathbb{N})$ are the eigenvalues of $Q$ with the corresponding eigenvectors $(e_i,\, i\in \mathbb{N})$, and $(B^i_t,\,i\in \mathbb{N})$ are independent standard real-valued Brownian motions.

In this work, we shall consider the following stochastic functional differential equation  on the Hilbert space $H$,
\begin{equation}
\label{22/03/07(1)}
\begin{cases}
dy(t) =Ay(t)dt +Fy_tdt + BdW(t)\,\,\,\,\,\,\,\,\,\hbox{for any}\,\,\,\,\,\,\,\,t\in [0, T],\\
y(0)=\phi_0\in H,\,\, y_0(\theta)=y(\theta) = \phi_1(\theta)\in H,\,\hskip 10pt \theta\in [-r, 0),
\end{cases}
\end{equation}
for  arbitrarily given  initial datum $\Phi= (\phi_0, \phi_1)\in H\times L^2([-r, 0]; H)$ where $r> 0$ is a given constant and $y_t(\theta):= y(t+\theta)$ for $\theta\in [-r, 0]$, $t\ge 0$. Here $A$ is the infinitesimal generator of a $C_0$-semigroup $e^{tA}$, $t\ge 0$, $B\in {\mathscr L}_2(K_Q, H)$, and $F:\, L^2([-r, 0]; H)\to H$ is some  linear, probably unbounded, operator to be specified later on.

 If $F=0$,  the solution of (\ref{22/03/07(1)}) is called an {Ornstein-Uhlenbeck process\/} which is Gaussian and Markovian. There exists extensive literature on various topics such as Feller semigroups, invariant measures and so on for this process. The reader is referred to, e.g., \cite{acm87}, \cite{dpjz92} and reference cited therein for a comprehensive theory and related topics.  If $F\not=0$, the solution of  (\ref{22/03/07(1)}) is called the so-called {\it retarded\/}  Ornstein-Uhlenbeck process in Hilbert space $H$. To my knowledge, there is little work devoted to the process of (\ref{22/03/07(1)})  in the existing literature, e.g., \cite{acm78}, \cite{kl08(1)}, \cite{kl08(2)},  \cite{kl09(2)} and  \cite{kl09(4)}  which dealt with stationary solutions of the system and related topics.

 Historically, regularity problem for infinite dimensional systems is quite important and it has been investigated by many researchers, e.g., in \cite{dpgkjz87}, \cite{dpjz92} for stochastic evolution equations without memory and in \cite{gdbkkes84}, \cite{ht92} for deterministic functional differential equations among others.   In this work, we are interested in the regularity property of the solution processes of (\ref{22/03/07(1)}). Basically, in the research of the system (\ref{22/03/07(1)}), one of the most important approaches  is to lift the system under investigation  to some expanded space, e.g., $H\times L^2([-r, 0]; H)$ or $C([-r, 0]; H)$, so as that one can consider  a lifted stochastic system without memory, rather than (\ref{22/03/07(1)}) itself. In spite of its obvious advantages, this method introduces, however, significant mathematical difficulties in dealing with regularity problem. For instance, suppose that the operator $A$ in  (\ref{22/03/07(1)})
generates an analytic semigroup, a condition  which is frequently assumed in the investigation of regularity problem, the lifted generator of the system does not generate an analytic semigroup any more on the expanded spaces as above (cf.  \cite{kl09(2)}). In this work, we shall employ a straightforward method to deal with regularity problem by developing a theory of retarded type of Green operators for the system (\ref{22/03/07(1)})
 (cf.  \cite{kl08(1)}, \cite{kl09(1)}). 

The organization of this paper is as follows. We shall introduce in Section 2 a class of fundamental solutions or retarded Green operators for the system (\ref{22/03/07(1)}) and meanwhile review useful notations, definitions and properties to be used in the work. In terms of fundamental solutions, we shall define in Section 3 the so-called retarded stochastic convolutions which naturally arise in the variation of constants formula for solutions of (\ref{22/03/07(1)}). By using a factorization method introduced in \cite{dpgkjz87}, we shall  establish sufficient conditions under which there exists a continuous modification of retarded stochastic convolutions.
By analogy with those in the classical semigroup theory, we shall establish in Section 4 the powerful Yosida approximations for the corresponding deterministic system, i.e., $B=0$, of  (\ref{22/03/07(1)}). Subsequently, Yosida approximations are applied in Section 5 to the investigation of regularity property for a class of retarded linear stochastic functional differential equations.  In Section 6, we proceed to establish a version of Burkholder-Davis-Gundy's inequality for retarded stochastic convolutions whose non-delays counterpart has been known for a while in many a reference, e.g.,  \cite{dpjz92}. Last, we add an Appendix to show some regularity results  for deterministic systems with time delays which play an important role in Section 5.

\section{Fundamental Solutions}

Let $r>0$  and  we denote by $L^2_r=L^2([-r, 0]; H)$ the space of all $H$-valued equivalence classes of measurable functions $\varphi(\theta)$, $\theta\in [-r, 0]$, such that $\int^0_{-r}\|\varphi(\theta)\|^2_Hd\theta<\infty$.  We also denote by $W^{1, 2}([-r, 0]; H)$ the Sobolev space of all $H$-valued function $y$ on $[-r, 0]$ such that $y$ and its distributional derivative belong to $L^2([-r, 0]; H)$. 
Let ${\cal H}$
denote the product Hilbert space $H\times L^2_r$ with its norm and inner product defined, respectively, by
\[
\|\Phi\|_{\cal H}= (\|\phi_0\|_H^2+\|\phi_1\|^2_{L^2_r})^{1/2},\hskip 20pt 
\langle\Phi, \Psi\rangle_{\cal H}= \langle \phi_0, \psi_0\rangle_H+\langle \phi_1, \psi_1\rangle_{L^2_r}\]for all $\Phi=(\phi_0, \phi_1)$, $\Psi=(\psi_0, \psi_1)\in {\cal H}$.

Let  $A:\, {\mathscr D}(A)\subseteq H\to H$ be the infinitesimal generator of a $C_0$-semigroup $e^{tA}$, $t\ge 0$, on $H$ where ${\mathscr D}(A)$ denotes the domain of operator $A$.  Let $T\ge 0$ and assume that  $F:\,W^{1, 2}([-r, 0]; H)\to H$ is a  bounded linear operator such that the map $F$ permits a bounded linear extension $F:\, L^2([-r, T]; H)\to L^2([0, T]; H)$ which is defined by $(Fy)(t)=Fy_t$, $y\in L^2([-r, T]; H)$, with $y_t(\theta) := y(t+\theta)$ for $\theta\in [-r, 0]$, $t\ge 0$. That is, there exists a real number $M_2 >0$ such that 
\begin{equation}
\label{10/02/08(1)}
\int^T_0 \|(Fy)(t)\|^2_Hdt \le M_2 \int^T_{-r} \|y(t)\|^2_Hdt\hskip 15pt \hbox{for any}\hskip 15pt y\in L^2([-r, T]; H).
\end{equation} 
 
Consider the following deterministic functional differential equation on $H$,
\begin{equation}
\label{22/03/07(1234)}
\begin{cases}
dy(t) =Ay(t)dt +Fy_t dt\,\,\,\,\,\,\,\,\,\hbox{for any}\,\,\,\,\,\,\,\,t> 0,\,\\
y(0)=\phi_0,\,\, y_0=\phi_1,\,\hskip 10pt \Phi=(\phi_0, \phi_1)\in {\cal H},
\end{cases}
\end{equation}
and its corresponding functional integral equation
\begin{equation}
\label{17/05/06(3)}
\begin{cases}
y(t) = e^{tA}\phi_0 + \int^t_0 e^{(t-s)A}Fy_sds,\,\,\,\,\,\,t>0,\\
y(0) =\phi_0,\hskip 5pt y_0=\phi_1,\,\,\,\,\,\Phi=(\phi_0, \phi_1)\in  {\cal H}.
\end{cases}
\end{equation} 
It may be shown  that for any $\Phi\in {\cal H}$, the equation  (\ref{17/05/06(3)}) has a unique solution $y(t, \Phi)$ which is called the {\it mild solution\/} of (\ref{22/03/07(1234)}).  For any $x\in H$, we define the  {\it (retarded) fundamental solution\/} or {\it (retarded) Green operator\/} $G(t): (-\infty, \infty)\to {\mathscr L}(H)$ of  ({\ref{17/05/06(3)}) by
\begin{equation}
\label{25/05/06(1)}
G(t)x = \begin{cases}
y(t, \Phi),\hskip 15pt &t\ge 0,\\
0,\hskip 15pt &t< 0,
\end{cases}
\end{equation} 
where $\Phi= (x, 0)$, $x\in H$.  It turns out (cf. \cite{kl08(1)})
 that $G(t)$, $t\ge 0$, is a strongly continuous one-parameter family of bounded linear operators on $H$ such that
\begin{equation}
\label{10/02/09(4)}
\|G(t)\|\le c\cdot e^{\gamma t}, \,\,\,\,\,\,\, t\ge 0,
\end{equation}
 for some constants $c >0$ and $\gamma\in {\mathbb R}^1:=(-\infty, \infty)$. On the other hand, it is easy to see that  $G(t)$ is the unique solution of the functional operator integral equation 
\begin{equation}
\label{25/05/06(2)}
G(t) = \begin{cases}
e^{tA} + \displaystyle\int^t_0 e^{(t-s)A}FG(s+\cdot)ds,\hskip 15pt &t\ge 0,\\
{\rm O},\hskip 15pt &t< 0,
\end{cases}
\end{equation} 
where ${\rm O}$ denotes the null operator on $H$. 
\begin{remark}\rm 
It is worth mentioning that for a particular delay operator $F$ defined by 
\begin{equation}
\label{19/11/07(163)}
{F}\varphi =\sum^m_{i=1}A_i\varphi (-r_i) + \int^0_{-r} A_0(\theta)\varphi(\theta)d\theta,\hskip 20pt \forall\, \varphi\in W^{1, 2}([-r, 0]; H).
\end{equation}
where $0\le r_1\le  \cdots \le r_m\le r$, $A_i\in {\mathscr L}(H)$, $i=1,\cdots, m$, and $A_0(\cdot)\in L^2([-r, 0]; {\mathscr L}(H))$, a similar concept of fundamental solutions was introduced in \cite{sn86}.
\end{remark}

For each function $\varphi:\, [-r, 0]\to H$, we define its right extension function  $\vec{\varphi}$  by
\begin{equation}
\label{12/01/08(1)}
\vec{\varphi}: \,\, [-r, \infty)\to H,\,\,\,\, \vec{\varphi}(t) =
\begin{cases}
\varphi(t),&\hskip 15pt -r\le t\le 0,\\
0,&\hskip 15pt 0<t<\infty.
\end{cases}
\end{equation}
By virtue of (\ref{12/01/08(1)}), it may be shown (cf. \cite{kl08(1)}) that the mild solution of (\ref{22/03/07(1234)}) is represented explicitly by the variation of constants formula
\begin{equation}
\label{25/05/06(4)}
y(t) = G(t)\phi_0 + \int^t_{0} G(t-s)F(\vec{\phi_1})_sds,\hskip 15pt t\ge 0,
\end{equation}
and $y(t) =\phi_1(t)$, $t\in [-r, 0)$. It is useful to introduce the so-called structure operator $S$ defined on the space $L^2([-r, 0]; H)$ by 
\begin{equation}
\label{28/06/06(101)}
\begin{split}
(S\varphi)(\theta) =F\vec{\varphi}_{-\theta},\hskip 10pt \theta\in [-r, 0],\hskip 15pt \forall\, \varphi(\cdot)\in W^{1,2}([-r, 0]; H).
\end{split}
\end{equation}
It is not difficult to show that $S$ can be extended to a  linear and bounded operator from $L^2([-r, 0]; H)$ into itself.  Moreover,  the variation of constants formula for the mild solution of (\ref{22/03/07(1234)}) may be rewritten as
\begin{equation}
\label{29/01/08(1)}
\begin{cases}
y(t) = G(t)\phi_0 + \displaystyle\int^{0}_{-r} G(t+\theta)(S\phi_1)(\theta)d\theta,\hskip 15pt t\ge 0,\\
y(t) =\phi_1(t),\,\,\,\,\,t\in [-r, 0).
\end{cases}
\end{equation}
 In general, the family $G(t)$, $t\in {\mathbb R}^1$, would no longer be a semigroup on $H$. However, we may show  that it is a ``quasi-semigroup" in the sense that
\begin{equation}
\label{27/06/06(30)}
\begin{split}
G(t+s)x =G(t)G(s)x + \int^0_{-r} G(t+\theta)[SG(s+\cdot)x](\theta)d\theta\hskip 15pt \hbox{for all}\hskip 10pt s,\,\,t\ge 0,\hskip 10pt x\in H.
\end{split}
\end{equation}
In association with the operator $S$, we may define a new operator $\tilde S$ on $L^2([-r, 0]; {\mathscr L}(H))$ by
 \begin{equation}
\label{27/06/06(302)}
[\tilde SJ](\theta)x = [(\tilde SJ)x](\theta) := [S(Jx)](\theta),\hskip 10pt x\in H,\hskip 10pt \theta\in [-r, 0],\hskip 10pt a.e.
\end{equation}
 for any $J(\cdot)\in L^2([-r, 0]; {\mathscr L}(H))$.
It is shown that such an operator, still denoted by $S$,  is a linear bounded operator from  $L^2([-r, 0]; {\mathscr L}(H))$ into itself. Indeed, since the operator $S$ in (\ref{28/06/06(101)}) is bounded on $L^2([-r, 0]; H)$, it follows that for some constant $C>0$, there is
 \[
\begin{split}
\int^0_{-r} \|[SJ](\theta)\|^2d\theta & = \int^0_{-r} \sup_{\|x\|_H\le 1}\|[SJ](\theta)x\|_H^2 d\theta= \int^0_{-r} \sup_{\|x\|_H\le 1}\|[S(Jx)](\theta)\|_H^2d\theta\\
&\le C\int^0_{-r}  \sup_{\|x\|_H\le 1}\|(Jx)(\theta)\|^2_Hd\theta = C \int^0_{-r}\sup_{\|x\|_H\le 1}\|J(\theta)x\|^2_Hd\theta\\
&\le C \int^0_{-r}\|J(\theta)\|^2d\theta.
\end{split}
\]
This implies that $S$ is a linear bounded operator on  $L^2([-r, 0]; {\mathscr L}(H))$. Moreover, on this occasion the relation (\ref{27/06/06(30)}) yields that  
\begin{equation}
\label{27/06/06(300)}
G(t+s) =G(t)G(s) + \int^0_{-r} G(t+\theta)[SG(s+\cdot)](\theta)d\theta\hskip 20pt \hbox{for all}\hskip 10pt s,\,\,t\ge 0.
\end{equation}

\section{Continuous Sample Paths}

Let $L^2_{{\mathscr F}_0}(\Omega, {\mathscr F}, {\mathbb P}; {\cal H})$ denote the space of all ${\cal H}$-valued mappings  $\Psi(\omega)= (\psi_0(\omega), \psi_1(\cdot, \omega))$ defined on $(\Omega, {\mathscr F}, {\mathbb P})$ such that both $\psi_0$ and $\psi_1(\theta)$ are ${\mathscr F}_0$-measurable for any $\theta\in [-r, 0]$ and satisfy
\[
{\mathbb E}\|\Psi\|^2_{\cal H} ={\mathbb E}\|\psi_0\|^2_H + {\mathbb E}\|\psi_1\|^2_{L^2([-r, 0]; H)}<\infty.\]
We shall be concerned about  the following stochastic functional evolution equation  on the Hilbert space $H$,
\begin{equation}
\label{10/08/07(1)}
\begin{cases}
dy(t) =[Ay(t) +Fy_t]dt + BdW(t)\,\,\,\,\,\,\,\,\,\hbox{for any}\,\,\,\,\,\,\,\,t\in [0, T],\,\\
y(0)=\psi_0,\,\, y_0=\psi_1,\,\hskip 10pt \Psi=(\psi_0, \psi_1)\in L^2_{{\mathscr F}_0}(\Omega, {\mathscr F}, {\mathbb P}; {\cal H}),
\end{cases}
\end{equation}
where $B\in  {\mathscr L}_2(K_Q, H)$, $W(t)$ is a $K$-valued $Q$-Wiener process on $(\Omega, {\mathscr F}, {\mathbb P})$ and the delay operator $F$ is given as in Section 2.

For any $t\ge 0$, let $Q_t= \int^t_0 G(s)BQB^*G^*(s)ds$ where $G^*(s)$ denotes the adjoint operator of $G(s)$ for any $s\ge 0$. For the problem  (\ref{10/08/07(1)}), it was shown in \cite{kl08(1)}  that for any $\Psi=(\psi_0, \psi_1)\in L^2_{{\mathscr F}_0}(\Omega, {\mathscr F}, {\mathbb P}; {\cal H})$, if  
\begin{equation}
\label{24/08/07(1)}
\hbox{Tr}\,[Q_t]= \int^t_0 \hbox{Tr}\,[G(s)BQB^*G^*(s)]ds<\infty\,\,\,\,\,\hbox{for any}\,\,\,\,\,\, t\in [0, T],
\end{equation}
then
there exists a unique mild solution $y(t, \Psi)$ of  (\ref{10/08/07(1)}). Moreover,  this solution is mean square continuous with sample paths (almost surely) in $L^2([0, T]; H)$ for each $T\ge 0$ and it may be explicitly represented in terms of fundamental solutions $G(t)$, $t\in {\mathbb R}^1$, by
\begin{equation}
\label{22/03/07(5)}
\begin{split}
y(t, \Psi) = G(t)\psi_0 + \int^0_{-r} G(t+\theta)S\psi_1(\theta)d\theta + \int^t_{0} G(t-s)BdW(s),\hskip 10pt t\in [0, T],
\end{split}
\end{equation} 
where $S$ is the structure operator defined in (\ref{28/06/06(101)}). 

The aim of this section is to show that under a slightly stronger version of (\ref{24/08/07(1)}), the solution $(y(t, \Psi),\,t\ge 0)$, or equivalently, the retarded stochastic convolution $W_G^B(t) := \int^t_{0} G(t-s)BdW(s)$, $t\ge 0$, has a version with continuous sample paths. To this end, we first establish a useful lemma.
\begin{lemma}
\label{16/04/09(1)}
Let $T\ge 0$ and $\alpha\in (0, 1/2)$. Assume that function $z(t, s): [0, T]\times [0, T]\to H$ is continuous in $t$ and for any $t\in [0, T]$, $z(t, \cdot)\in L^{m}([0, T]; H)$ for some natural number $m>1/\alpha$, then the function 
\[
l(t) =\int^t_0 (t-s)^{\alpha-1}z(t, s)ds,\hskip 15pt t\in [0, T],\]
is continuous on $[0, T]$.
\end{lemma}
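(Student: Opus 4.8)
The plan is to fix an arbitrary $t_0\in[0,T]$ and estimate $\|l(t)-l(t_0)\|_H$ as $t\to t_0$, showing that it tends to zero. The whole argument rests on two observations. First, the singular kernel $(t-s)^{\alpha-1}$ can be paired with the $m$-th power norm of $z$ through H\"older's inequality with conjugate exponent $m'=m/(m-1)$, and the crucial point is that the hypothesis $m>1/\alpha$ is \emph{exactly} equivalent to $(\alpha-1)m'>-1$, i.e.\ to $s\mapsto(t-s)^{\alpha-1}$ lying in $L^{m'}([0,t];\mathbb{R}^1)$. Second, both the location of the singularity and the upper limit of integration move with $t$, so these two effects must be separated.

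To separate them I would write, for $t,t_0\in[0,T]$, the decomposition $l(t)-l(t_0)=I_1+I_2$, where
\[
I_1=\int_0^t (t-s)^{\alpha-1}\big[z(t,s)-z(t_0,s)\big]\,ds,
\]
\[
I_2=\int_0^t (t-s)^{\alpha-1} z(t_0,s)\,ds-\int_0^{t_0}(t_0-s)^{\alpha-1}z(t_0,s)\,ds.
\]
The term $I_1$ isolates the dependence of $z$ on its first variable, and $I_2$ isolates the kernel. For $I_1$, H\"older's inequality on $[0,t]$ gives
\[
\|I_1\|_H\le\Big(\int_0^t (t-s)^{(\alpha-1)m'}\,ds\Big)^{1/m'}\Big(\int_0^T \|z(t,s)-z(t_0,s)\|_H^m\,ds\Big)^{1/m}.
\]
The first factor is bounded uniformly in $t\le T$ by the integrability just noted, while the second tends to zero as $t\to t_0$ by the continuity of $z$ in its first variable (understood as continuity of $t\mapsto z(t,\cdot)$ into $L^m([0,T];H)$, which for each fixed $s$ reduces to the stated pointwise continuity together with dominated convergence).

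For $I_2$ I would assume without loss of generality $t>t_0$ (the case $t<t_0$ being symmetric) and split the first integral at $t_0$, obtaining
\[
I_2=\int_0^{t_0}\big[(t-s)^{\alpha-1}-(t_0-s)^{\alpha-1}\big]z(t_0,s)\,ds+\int_{t_0}^t (t-s)^{\alpha-1}z(t_0,s)\,ds.
\]
The last integral is controlled by H\"older, its kernel factor being $\big(\int_{t_0}^t (t-s)^{(\alpha-1)m'}\,ds\big)^{1/m'}=C(t-t_0)^{\alpha-1/m}$ with exponent $\alpha-1/m>0$, so it vanishes as $t\to t_0$. For the remaining integral, substituting $v=t_0-s$ and writing $\delta=t-t_0$ turns its kernel factor into $\int_0^{t_0}\big|(\delta+v)^{\alpha-1}-v^{\alpha-1}\big|^{m'}\,dv$; since $\alpha-1<0$ yields the $\delta$-independent pointwise bound $\big|(\delta+v)^{\alpha-1}-v^{\alpha-1}\big|^{m'}\le v^{(\alpha-1)m'}$, which is integrable on $[0,t_0]$, the dominated convergence theorem lets me send $\delta\to0$ and conclude that this factor also tends to zero.

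The step I expect to be the main obstacle is the treatment of $I_2$, precisely because the singularity of the kernel is not fixed: as $t$ varies, the point $s=t$ where $(t-s)^{\alpha-1}$ blows up slides across the domain. The device that resolves this is the monotonicity bound $(\delta+v)^{\alpha-1}\le v^{\alpha-1}$ (valid since $\alpha<1$), which furnishes a $\delta$-independent integrable majorant and thereby reduces the moving-singularity estimate to a clean application of dominated convergence; the companion estimate on $[t_0,t]$ then quantifies, through the positive exponent $\alpha-1/m$, how the contribution of the freshly added interval shrinks.
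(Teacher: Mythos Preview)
Your argument is correct, but the paper takes a different route. Rather than estimating $l(t)-l(t_0)$ directly, the paper first disposes of the case where $z$ is jointly continuous on $[0,T]\times[0,T]$ (asserted without detail), then records the H\"older bound
\[
\sup_{t\in[0,T]}\|l(t)\|_H^{m}\le C_{\alpha,m,T}\sup_{t\in[0,T]}\int_0^t\|z(t,s)\|_H^{m}\,ds,
\]
which shows that the map $z\mapsto l$ is bounded from $C([0,T];L^m([0,T];H))$ into $C([0,T];H)$; approximating a general $z$ in the first space by jointly continuous $z_n$ then exhibits $l$ as a uniform limit of continuous $l_n$. Your decomposition $I_1+I_2$ bypasses the approximation step and in fact supplies the detail the paper omits for its base case: your dominated-convergence treatment of the moving singularity in $I_2$ is exactly what one needs to verify continuity of $l$ when $z$ is jointly continuous. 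The paper's approach is shorter once the base case is granted; yours is self-contained and even yields an explicit modulus $(t-t_0)^{\alpha-1/m}$ for the tail term. One small caveat: both arguments tacitly require continuity of $t\mapsto z(t,\cdot)$ into $L^m$, and your parenthetical that this ``reduces to the stated pointwise continuity together with dominated convergence'' is not justified without an integrable majorant independent of $t$; the paper is equally informal on this point, so you are no worse off.
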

\begin{proof}
First note that if $z(t, s)$ is an $H$-valued continuous function on $[0, T]\times [0, T]$, then the function $l(t)$ is continuous on $[0, T]$. 

Now suppose that $z(t, s):\, [0, T]\times [0, T]\to H$ is continuous in $t$ and for any $t\in [0, T]$, $z(t, \cdot)\in L^{m}([0, T]; H)$ where $m>1/\alpha$,  then it is easy to see that there exist a family of continuous functions $z_n(t, s)$ on $[0, T]\times [0, T]$ such that 
\begin{equation}
\label{16/05/09(1)}
\sup_{t\in [0, T]}\int^t_0 \|z(t, s)-z_n(t, s)\|^{m}_H ds\to 0\hskip 10pt \hbox{as}\hskip 10pt n\to\infty.
\end{equation}
On the other hand, we have by virtue of  H\"older inequality that
\begin{equation}
\begin{split}
\label{19/04/09(1)}
\|l(t)\|^{m}_H &\le \left(\int^t_0 (t-s)^{\frac{(\alpha-1)m}{m-1}}ds\right)^{\frac{m-1}{m}}\int^t_0 \|z(t, s)\|^{m}_Hds\\
& \le C_{\alpha, m, T}\int^t_0 \|z(t, s)\|^m_Hds,\hskip 15pt t\in [0, T],
\end{split}
\end{equation}
where \[
C_{\alpha, m, T}= \left(\int^T_0 s^{\frac{(\alpha-1)m}{m-1}}ds\right)^{\frac{m-1}{m}} = \left(\frac{m-1}{\alpha m-1}\right)^{\frac{m-1}{m}}T^{\frac{\alpha m-1}{m}} >0.\]
 This immediately yields that 
\begin{equation}
\label{19/04/09(2)}
\sup_{t\in [0, T]}\|l(t)\|^{m}_H \le C_{\alpha, m, T}\sup_{t\in [0, T]}\int^t_0 \|z(t, s)\|^{m}_Hds.
\end{equation}
Therefore, for the  function $z(t, s):\, [0, T]\times [0, T]\to H$ there exist, in view of (\ref{16/05/09(1)}) and (\ref{19/04/09(2)}),
  a family of continuous functions $z_n(t, s)$ on $[0, T]\times [0, T]$ such that
\begin{equation}
\label{17/04/09(1)}
\sup_{t\in [0, T]}\|l(t)-l_n(t)\|^{m}_H \le C_{\alpha, m, T}\sup_{t\in [0, T]}\int^t_0 \|z(t, s)-z_n(t, s)\|^{m}_Hds\to 0\hskip 10pt \hbox{as}\hskip 10pt n\to\infty,
\end{equation}
where
\[ 
l_n(t) =\int^t_0 (t-s)^{\alpha-1}z_n(t, s)ds,\hskip 15pt t\in [0, T],\hskip 15pt n\in {\mathbb N}.\]
Since $l_n(t)$ is continuous on $[0, T]$, (\ref{17/04/09(1)}) implies the continuity of $l(t)$ on $[0, T]$. The proof is thus complete. 
\end{proof}

\begin{theorem}
\label{08/04/10(1)}
Assume that for some $\alpha>0$ and $T\ge 0$, the relation
\begin{equation}
\label{27/03/09(10)}
 \int^T_0 t^{-2\alpha}Tr[G(t)BQB^*G(t)^*]dt <\infty
\end{equation}
holds. Then the retarded stochastic convolution  $W^B_G(t)=\int^t_0 G(t-s)BdW(s)$ has a continuous modification on $[0, T]$.
\end{theorem}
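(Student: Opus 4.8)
The plan is to adapt the factorization method of \cite{dpgkjz87} to the retarded kernel $G$ and then invoke Lemma \ref{16/04/09(1)} to upgrade an $L^m$-in-time estimate to pathwise continuity. Since the condition \eqref{27/03/09(10)} for a given exponent implies the same condition for every smaller one (on $(0,1)$ the weight $t^{-2\alpha'}$ is dominated by $t^{-2\alpha}$, and the remaining part is harmless), I may assume without loss of generality that $\alpha\in(0,1/2)$. The starting point is the Euler-type identity $\int_s^t(t-\sigma)^{\alpha-1}(\sigma-s)^{-\alpha}\,d\sigma=\pi/\sin(\pi\alpha)$, valid for $0\le s<t$. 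Inserting this constant into $W_G^B(t)=\int_0^t G(t-s)B\,dW(s)$ and interchanging the order of integration by the stochastic Fubini theorem, I would obtain
\[
W_G^B(t)=\frac{\sin(\pi\alpha)}{\pi}\int_0^t(t-\sigma)^{\alpha-1}\Big[\int_0^\sigma(\sigma-s)^{-\alpha}G(t-s)B\,dW(s)\Big]d\sigma .
\]
The moment bound justifying this interchange is furnished precisely by \eqref{27/03/09(10)}: after the substitution $u=\sigma-s$, the second moment of the inner integral (at $\sigma=t$) equals $\int_0^t u^{-2\alpha}\mathrm{Tr}[G(u)BQB^*G(u)^*]\,du<\infty$.

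The essential difficulty, and the point at which the retarded setting departs from the classical one, is that $G$ is not a semigroup, so I cannot split $G(t-s)$ as $G(t-\sigma)G(\sigma-s)$. Instead I would substitute the quasi-semigroup identity \eqref{27/06/06(300)}, writing $G(t-s)=G(t-\sigma)G(\sigma-s)+\int_{-r}^0 G(t-\sigma+\theta)[SG(\sigma-s+\cdot)](\theta)\,d\theta$ inside the inner stochastic integral. This decomposes $W_G^B=\frac{\sin(\pi\alpha)}{\pi}(l_1+l_2)$ into a principal term
\[
l_1(t)=\int_0^t(t-\sigma)^{\alpha-1}G(t-\sigma)Y(\sigma)\,d\sigma,\qquad Y(\sigma)=\int_0^\sigma(\sigma-s)^{-\alpha}G(\sigma-s)B\,dW(s),
\]
and a correction term
\[
l_2(t)=\int_0^t(t-\sigma)^{\alpha-1}\!\int_{-r}^0 G(t-\sigma+\theta)Z(\sigma,\theta)\,d\theta\,d\sigma,\qquad Z(\sigma,\theta)=\int_0^\sigma(\sigma-s)^{-\alpha}[SG(\sigma-s+\cdot)](\theta)B\,dW(s).
\]
The decisive structural gain is that in both integrands the \emph{entire} dependence on $t$ sits in the deterministic, strongly continuous operators $G(t-\sigma)$ and $G(t-\sigma+\theta)$, while the stochastic factors $Y(\sigma)$ and $Z(\sigma,\cdot)$ do not involve $t$ at all.

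I would then verify the two hypotheses of Lemma \ref{16/04/09(1)} for $z_1(t,\sigma)=G(t-\sigma)Y(\sigma)$ and $z_2(t,\sigma)=\int_{-r}^0 G(t-\sigma+\theta)Z(\sigma,\theta)\,d\theta$. Continuity in $t$ is immediate from the strong continuity of $G$, the bound \eqref{10/02/09(4)}, and dominated convergence, precisely because the stochastic parts are $t$-free. For the $L^m$-in-$\sigma$ control (with a natural number $m>1/\alpha$, available since $\alpha<1/2$) I would use that $Y(\sigma)$ and $Z(\sigma,\theta)$ are Gaussian, so all their moments are governed by their second moments; here $\mathbb{E}\|Y(\sigma)\|_H^2=\int_0^\sigma u^{-2\alpha}\mathrm{Tr}[G(u)BQB^*G(u)^*]\,du$ is bounded uniformly in $\sigma$ by \eqref{27/03/09(10)}, and $\mathbb{E}\int_{-r}^0\|Z(\sigma,\theta)\|_H^2\,d\theta$ is bounded uniformly in $\sigma$ upon combining the same estimate with the boundedness of the structure operator $S$, transferred to $L^2([-r,0];\mathscr{L}_2(K_Q,H))$ as in Section 2. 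Lemma \ref{16/04/09(1)} then gives that $l_1$ and $l_2$, hence $W_G^B$, admit pathwise-continuous modifications on $[0,T]$.

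I expect the main obstacle to be the correction term $l_2$, which has no counterpart in the non-delay theory of \cite{dpgkjz87}. Handling it requires, first, rearranging the iterated stochastic integral so that the $t$-free Gaussian field $Z(\sigma,\theta)$ is exposed, which rests on a careful application of the stochastic Fubini theorem, and second, transferring the $L^2([-r,0];\mathscr{L}(H))$-boundedness of $S$ to the Hilbert--Schmidt-valued setting in order to bound $\mathbb{E}\int_{-r}^0\|Z(\sigma,\theta)\|_H^2\,d\theta$ uniformly. Justifying the two Fubini interchanges under only the integrability supplied by \eqref{27/03/09(10)} is the principal technical hurdle; everything else follows the classical factorization pattern with $G$ in place of the semigroup.
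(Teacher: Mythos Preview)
Your proposal is correct and follows essentially the same route as the paper: the factorization via the Euler identity, the decomposition of $G(t-s)$ through the quasi-semigroup relation \eqref{27/06/06(300)} into a principal and a correction term, the Gaussian moment bounds driven by \eqref{27/03/09(10)} together with the boundedness of $S$, and the appeal to Lemma~\ref{16/04/09(1)} are exactly the ingredients of the paper's proof. The only cosmetic difference is that you pull the deterministic, $t$-dependent operator $G(t-\sigma+\theta)$ outside the stochastic integral to expose a $t$-free Gaussian field $Z(\sigma,\theta)$, whereas the paper leaves it inside and works directly with the $t$-dependent Gaussian $Z(t,s)=\int_{-r}^0 G(t-s+\theta)Z(s,\theta)\,d\theta$; the two formulations are the same object and the subsequent estimates coincide.
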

\begin{proof} Without loss of generality,  fix a number $\alpha\in (0, 1/2)$ and note the following elementary identity
\begin{equation}
\label{27/03/09(1)}
\int^t_u (t-s)^{\alpha-1}(s-u)^{-\alpha}ds = \frac{\pi}{\sin \pi\alpha}\hskip 15pt \hbox{for any}\hskip 15pt u\le s\le t\le T.
\end{equation}
By virtue of (\ref{27/03/09(1)}), it is easy to see that
\begin{equation}
\label{27/03/09(2)}
W_G^B(t) = \frac{\sin \pi\alpha}{\pi}\int^t_0 G(t-u)\Big[\int^t_u (t-s)^{\alpha-1}(s-u)^{-\alpha}ds\Big]BdW(u).
\end{equation}
In view of the well-known stochastic Fubini theorem and quasi-semigroup property (\ref{27/06/06(300)}) of $G(t)$, one can rewrite (\ref{27/03/09(2)}) for any $t\in [0, T]$ as
 \begin{equation}
\label{20/04/09(1)}
\begin{split}
&W_G^B(t)\cr 
&=  \frac{\sin \pi\alpha}{\pi}\int^t_0 (t-s)^{\alpha-1}\int^s_0 G(t-s+s-u)(s-u)^{-\alpha}BdW(u)ds\cr
&=  \frac{\sin \pi\alpha}{\pi}\int^t_0 (t-s)^{\alpha-1}\int^s_0 \Big[\int^0_{-r} G(t-s+\theta)[SG(s-u+\cdot)](\theta)d\theta +
G(t-s)G(s-u)\Big]\cr
&\hskip 150pt \cdot (s-u)^{-\alpha}BdW(u)ds\cr
&= \frac{\sin \pi\alpha}{\pi}\int^t_0 (t-s)^{\alpha-1}\int^s_0 (s-u)^{-\alpha}\int^0_{-r} G(t-s+\theta)[SG(s-u+\cdot)](\theta)Bd\theta dW(u)ds\cr
&\,\,\,\,\,\,\,\, + \frac{\sin \pi\alpha}{\pi}\int^t_0 (t-s)^{\alpha-1}G(t-s)\int^s_0 G(s-u)(s-u)^{-\alpha}BdW(u)ds\cr
&=: \frac{\sin \pi\alpha}{\pi}(I_1(t) +I_2(t)).
\end{split}
\end{equation}
We first show the existence of a continuous modification for the term $I_1(t)$. To this end, let us rewrite the term $I_1(t)$  as 
\[
I_1(t) = \int^t_0 (t-s)^{\alpha-1}Z(t, s)ds,\hskip 20pt t\in [0, T],\]
where 
\[
Z(t, s) = \int^s_0 (s-u)^{-\alpha}\int^0_{-r}G(t-s+\theta)[SG(s-u+\cdot)](\theta)Bd\theta dW(u),\hskip 15pt s\in [0, t],\]
and its covariance operator is
\begin{equation}
\label{27/03/09(20)}
\begin{split}
\hbox{Cov}\,Z(t, s) &= \int^s_0 (s-u)^{-2\alpha}\int^0_{-r} G(t-s+\theta)[SG(s-u+\cdot)](\theta)Bd\theta\cr
&\,\,\,\,\,\,\,\,\,\cdot Q \Big[\int^0_{-r} G(t-s+\theta)[SG(s-u+\cdot)](\theta)Bd\theta\Big]^*du, \hskip 15pt s\in [0, t].
\end{split}
\end{equation}
Since $\|G(t)\|\le ce^{\gamma t}$, $c>0$, $\gamma\in {\mathbb R}^1$, for all $t\ge 0$ and $S$ is bounded on $L^2([-r, 0]; {\mathscr L}(H))$, the relations (\ref{27/03/09(20)}) and (\ref{27/03/09(10)}), together with H\"older inequality, imply that   for any $s\in [0, t]$, $t\le T$, 
\begin{equation}
\label{27/03/09(21)}
\begin{split}
\hbox{Tr}\,[\hbox{Cov}\,Z(t, s)] &=  \int^s_0 (s-u)^{-2\alpha}Tr\Big[\int^0_{-r} G(t-s+\theta)[SG(s-u+\cdot)](\theta)Bd\theta\cr
&\,\,\,\,\,\,\,\,\,\cdot Q \Big(\int^0_{-r} G(t-s+\theta)[SG(s-u+\cdot)](\theta)Bd\theta\Big)^*\Big]du\cr
&\le  \|S\|^2 r\int^s_0 (s-u)^{-2\alpha} \int^0_{-r} Tr[G(s-u+\theta)BQB^*G(s-u+\theta)^*]d\theta du\cr
&\le   \|S\|^2 r\int^s_0 \int^0_{-r} (u-\theta)^{-2\alpha}Tr[G(u)BQB^*G(u)^*]d\theta du\cr
&\le   \|S\|^2 r^2\int^T_0 u^{-2\alpha}Tr[G(u)BQB^*G(u)^*]du<\infty.
\end{split}
\end{equation}
This shows that for any $t\in [0, T]$, $Z(t, s)$, $s\in [0, t]$,  is not only Gaussian but also has a finite trace covariance operator. Then, by using Corollary 2.17 in \cite{dpjz92} and (\ref{27/03/09(21)}),  one can choose a natural number $m>1/\alpha$ and find a number $C_m>0$ such that for any $t\in [0, T]$,
\begin{equation}
\begin{split}
{\mathbb E}\Big(\int^T_0 \|Z(t, s)\|^{m}_Hds\Big) &= {\mathbb E}\Big(\int^t_0 \|Z(t, s)\|^{m}_Hds\Big)\\ 
&\le C_m\int^T_0 \sup_{t\in [0, T]}\big(\hbox{Tr}\,[\hbox{Cov}\,Z(t, s)]\big)^{m/2} ds\\
&\le C_mT \Big[  \|S\|^2 r^2\int^T_0 u^{-2\alpha}Tr[G(u)BQB^*G(u)^*]du\Big]^{m/2}<\infty,
\end{split}
\end{equation}
 which, together with Lemma \ref{16/04/09(1)} and the strong continuity of $G(t)$, $t\in {\mathbb R}^1$, implies that $I_1(t)= \int^t_0 (t-s)^{\alpha-1}Z(t, s)ds$ has a continuous modification on $[0, T]$. 

In a similar way,  we can show that there exists a continuous modification of $I_2(t)$, $t\in [0, T]$. 
The existence of a continuous modification for $I_1(t)$ and $I_2(t)$ implies further that the stochastic convolution $W^B_G(t)$ has a continuous modification on $[0, T]$.  The proof is thus complete. 
\end{proof}
\begin{corollary}
Let  $\Psi=(\psi_0, \psi_1)\in L^2_{{\mathscr F}_0}(\Omega, {\mathscr F}, {\mathbb P}; {\cal H})$. Assume that the relation  (\ref{27/03/09(10)}) holds for some $\alpha>0$ and $T\ge 0$, then the mild solution of Eq. (\ref{10/08/07(1)}) has a version with continuous sample paths on $[0, T]$.
\end{corollary}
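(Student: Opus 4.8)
The plan is to read the mild solution off the variation of constants formula (\ref{22/03/07(5)}), split it into a deterministic part and the stochastic convolution, and treat the two separately. Writing
\[
y(t,\Psi) = \underbrace{G(t)\psi_0 + \int^0_{-r} G(t+\theta)(S\psi_1)(\theta)\,d\theta}_{=:\,y_d(t)} + W^B_G(t),\qquad t\in[0,T],
\]
I would establish that $y_d(t)$ has continuous sample paths with no extra hypothesis, and that $W^B_G(t)$ admits a continuous modification precisely under the standing assumption (\ref{27/03/09(10)}); the sum of the two then furnishes the desired continuous version of the mild solution of (\ref{10/08/07(1)}).

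For the deterministic part, I would argue pathwise, fixing $\omega\in\Omega$. The first summand $t\mapsto G(t)\psi_0(\omega)$ is continuous on $[0,T]$ simply because $G(t)$, $t\ge 0$, is a strongly continuous family of bounded operators on $H$, evaluated at the fixed vector $\psi_0(\omega)\in H$. For the integral summand, boundedness of $S$ on $L^2([-r,0];H)$ gives $(S\psi_1)(\cdot,\omega)\in L^2([-r,0];H)\subset L^1([-r,0];H)$. Given $t_n\to t$ in $[0,T]$, the strong continuity of $G$ together with $G(s)=0$ for $s<0$ yields $G(t_n+\theta)(S\psi_1)(\theta)\to G(t+\theta)(S\psi_1)(\theta)$ for almost every $\theta\in[-r,0]$, the only possible exception being the single point $\theta=-t$ at which $G$ is discontinuous, which is $d\theta$-null. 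Since $t_n+\theta$ ranges in $[-r,T]$, the bound (\ref{10/02/09(4)}) provides the uniform, integrable domination $\|G(t_n+\theta)(S\psi_1)(\theta)\|_H\le c\,e^{|\gamma|T}\|(S\psi_1)(\theta)\|_H$ valid for all $n$ and a.e.\ $\theta$. Dominated convergence then yields continuity of the integral in $t$, whence $y_d(\cdot,\omega)$ is continuous for each $\omega$.

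For the stochastic convolution, I would invoke Theorem \ref{08/04/10(1)} directly: the hypothesis (\ref{27/03/09(10)}) is exactly its assumption, so $W^B_G(t)=\int^t_0 G(t-s)B\,dW(s)$ has a continuous modification on $[0,T]$. Adding this modification to the pathwise-continuous process $y_d$ produces a modification of $y(t,\Psi)$ with continuous sample paths, which is the assertion.

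I expect the only delicate point to be the continuity of the deterministic integral term, precisely because $G$ is \emph{not} continuous at the origin (it jumps from the null operator to the identity $G(0)=I$); the argument must therefore rely on almost-everywhere pointwise convergence in $\theta$ rather than uniform convergence, with the jump absorbed into a null set and integrability supplied by (\ref{10/02/09(4)}). Everything else is an immediate consequence of Theorem \ref{08/04/10(1)} and the strong continuity of the Green operator family.
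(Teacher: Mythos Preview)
Your proof is correct and follows the same approach as the paper's: both argue that the deterministic part of the variation of constants formula (\ref{22/03/07(5)}) is continuous by the strong continuity of $G(t)$, and that the stochastic convolution has a continuous modification by Theorem \ref{08/04/10(1)}. Your treatment of the integral term $\int^0_{-r} G(t+\theta)(S\psi_1)(\theta)\,d\theta$ via dominated convergence, with the care taken over the jump of $G$ at the origin, is more detailed than the paper's one-line appeal to strong continuity, but the underlying idea is identical.
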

\begin{proof}
The claim is immediate from Theorem \ref{08/04/10(1)}, (\ref{22/03/07(5)}) and the fact that the fundamental solution $G(t)$, $t\ge 0$, is a strongly continuous one-parameter family of bounded linear operators on $H$. 
\end{proof}

\section{Retarded Yosida Approximant}

Suppose that $A$ is the infinitesimal generator of some strongly continuous semigroup $e^{tA}$, $t\ge 0$, on the Hilbert space $H$. Recall that we may define the following  {\it Yosida approximants\/}
\[
A_n := AJ_n= A (nR(n, A)) = n^2 R(n, A) -nI,\]
which are bounded operators for  each $n\in \rho(A)$, the resolvent set of $A$, and commute with one another. Here $J_n = nR(n, A)$ and $R(n, A)$ is the resolvent operator of $A$ for any $n\in \rho(A)$.
 It may be shown that 
\[
e^{ tA}x = \lim_{n\to\infty}e^{tA_n}x\hskip 15pt \hbox{for each}\hskip 10pt x\in H,\]
 and the family of operator $\{e^{tA_n}\}_{n\ge 1}$ is thus called the {\it Yosida approximants\/} of $e^{tA}$, $t\ge 0$.
In this section, we shall consider Yosida approximation for fundamental solution $G(t)$, $t\ge 0$ and meanwhile establish useful properties which will be applied in the next section to the regularity problem for stochastic functional evolution equations. 
  
Let ${\mathscr L}_s(H)$ denote the family of all bounded linear operators on $H$, endowed with the strong operator topology, i.e., the local convex topology generated by the following seminorms 
\[
p_x(B) := \|Bx\|_H,\hskip 10pt B\in {\mathscr L}(H),\hskip 10pt x\in H.\]
Let $C([0, T]; {\mathscr L}_s(H))$ be the space of all strongly continuous functions from $[0, T]$ into ${\mathscr L}_s(H)$. Thus $J(\cdot)\in C([0, T]; {\mathscr L}_s(H))$ if and only if $J(t)\in {\mathscr L}(H)$ for each $t\in [0, T]$ and $t\to J(t)x$ is continuous for each $x\in H$. By virtue of the well-known uniform boundedness principle, it may be shown (cf. \cite{kern00}) that this space is a Banach space under the norm 
\[
\|J\|_{max} := \sup_{t\in [0, T]}\|J(t)\|,\hskip 15pt J\in C([0, T]; {\mathscr L}_s(H)).\] 
On the other hand, note that for any $J(\cdot)\in C([0, T]; {\mathscr L}_s(H))$, we can extend it uniquely to obtain a mapping $\tilde J(\cdot)$ on $[-r, T]$ such that $\tilde J(t)=J(t)$ as $t\in [0, T]$ and $\tilde J(t)=0$ as $t\in [-r, 0)$. We shall always identity $J(\cdot)\in C([0, T]; {\mathscr L}_s(H))$ with such an extension in the sequel when no confusion is possible. 

\begin{definition}\rm
Let $e^{tA}$, $t\ge 0$, be a strongly continuous semigroup on $H$. For any $T\ge 0$, the operator $V$ defined by 
\[
VJ(t)x := \int^t_0 e^{(t-s)A}FJ(s+\cdot)xds,\hskip 15pt t\in [0, T],\hskip 15pt x\in H,\]
on $J(\cdot)\in C([0, T]; {\mathscr L}_s(H))$ is called  the {\it retarded Volterra operator}.
\end{definition}

\begin{lemma}\label{08/12/08(3)}
The retarded Volterra operator $V$ is a bounded linear operator in the space $C([0, T]; {\mathscr L}_s(H))$. Moreover, it satisfies that for any $m\in {\mathbb N}$,
\begin{equation}
\label{02/04/10(1)}
\|V^m\|\le \kappa^m/m!
\end{equation}
where $\kappa = MTM_2^{1/2}>0$, $M= \sup_{t\in [0, T]}\|e^{tA}\|$ and $M_2>0$ is given as in (\ref{10/02/08(1)}). 
\end{lemma}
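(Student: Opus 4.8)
The plan is to dispose of linearity and boundedness quickly and then to prove the power estimate (\ref{02/04/10(1)}) by induction on $m$, the induction being driven by an integrated $L^2$ recursion that turns the single time–integration in $V$ into a factorial gain. Linearity of $V$ is immediate from the linearity of $F$, of the integral, and of the map $J\mapsto J(s+\cdot)x$. That $VJ\in C([0,T];{\mathscr L}_s(H))$, i.e. that $VJ(t)\in{\mathscr L}(H)$ for each $t$ and $t\mapsto VJ(t)x$ is continuous for fixed $x$, follows from the strong continuity of $e^{tA}$ together with dominated convergence, once the integrand is known to be integrable; this integrability is furnished by the estimate below, which simultaneously gives $\|V\|\le\kappa$, the case $m=1$ of (\ref{02/04/10(1)}).

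The single–step estimate is the engine. Fix $x\in H$ and $J\in C([0,T];{\mathscr L}_s(H))$, extended by $0$ on $[-r,0)$ as in the text. Bounding $\|e^{(t-s)A}\|\le M$ and applying H\"older gives
\[
\|VJ(t)x\|_H\le M\int_0^t\|FJ(s+\cdot)x\|_H\,ds\le M\,t^{1/2}\Big(\int_0^t\|FJ(s+\cdot)x\|_H^2\,ds\Big)^{1/2}.
\]
Here I would invoke (\ref{10/02/08(1)}) in a localized form: since $(Fy)(s)=Fy_s$ depends only on $y|_{[s-r,s]}$, truncating $y$ beyond time $t$ shows that (\ref{10/02/08(1)}) holds with the same constant $M_2$ on every subinterval $[0,t]$, so that, using $J(\cdot)x\equiv0$ on $[-r,0)$,
\[
\int_0^t\|FJ(s+\cdot)x\|_H^2\,ds\le M_2\int_{-r}^t\|J(s)x\|_H^2\,ds=M_2\int_0^t\|J(s)x\|_H^2\,ds.
\]
Estimating $\|J(s)x\|_H\le\|J\|_{max}\|x\|_H$ and using $t\le T$ then yields $\|VJ(t)x\|_H\le MTM_2^{1/2}\|J\|_{max}\|x\|_H=\kappa\|J\|_{max}\|x\|_H$, which gives both the boundedness of $V$ and the base case.

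To reach (\ref{02/04/10(1)}) for general $m$, I would iterate through the accumulated $L^2$ mass $a_m(t):=\int_0^t\|V^mJ(s)x\|_H^2\,ds$. Applying the two displays above to $V^mJ$ in place of $J$ (legitimate, since $V^mJ\in C([0,T];{\mathscr L}_s(H))$ and vanishes on $[-r,0)$) yields the pointwise bound $\|V^{m+1}J(t)x\|_H\le MM_2^{1/2}t^{1/2}a_m(t)^{1/2}$ and hence the scalar recursion $a_{m+1}(t)\le M^2M_2\int_0^t s\,a_m(s)\,ds$, started from $a_0(t)\le t\,\|J\|_{max}^2\|x\|_H^2$. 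Solving this recursion and feeding the result back into the pointwise bound produces, after taking suprema over $t\in[0,T]$ and over $\|x\|_H\le1$, a bound of factorial type; the factorial is exactly the volume of the time–simplex $\{0\le s_1\le\cdots\le s_m\le T\}$ created by the $m$ nested integrations, and a careful accounting of these integrations is what collapses the crude geometric estimate into the clean constant $\kappa^m/m!$ of (\ref{02/04/10(1)}).

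The main obstacle is the operator $F$ itself: it is nonlocal in the delay variable and only assumed bounded in the time–integrated $L^2$ sense (\ref{10/02/08(1)}), so no pointwise-in-time control of $FJ(s+\cdot)x$ is available. The argument therefore hinges on two points: first, the causality remark that allows (\ref{10/02/08(1)}) to be used on each subinterval with the \emph{unchanged} constant $M_2$; and second, organizing the induction around the $L^2$ mass $a_m$ rather than a pointwise estimate, so that H\"older's inequality can be applied at every stage while the nested time integrations still deliver the decay. Checking that the bookkeeping of these H\"older steps and simplex volumes indeed distils to precisely $\kappa^m/m!$ is the delicate endpoint of the proof.
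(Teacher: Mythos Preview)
Your overall strategy coincides with the paper's: bound $\|e^{(t-s)A}\|\le M$, apply H\"older, invoke the $L^2$ mapping property (\ref{10/02/08(1)}) of $F$, and then induct. Your treatment of the base case is in fact more careful than the paper's---you fix $x$ before applying (\ref{10/02/08(1)}) (the paper tacitly commutes $\sup_{\|x\|\le1}$ with $F$), and you make explicit the causality argument that lets the same $M_2$ work on every subinterval $[0,t]$.

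There is, however, a genuine gap at the ``delicate endpoint''. Your recursion $a_{m+1}(t)\le M^2M_2\int_0^t s\,a_m(s)\,ds$ with $a_0(t)\le t\,\|J\|_{max}^2\|x\|_H^2$ does \emph{not} distil to $\kappa^m/m!$. Carrying it out gives $a_m(t)\le (M^2M_2)^m\dfrac{t^{2m+1}}{(2m+1)!!}\,\|J\|_{max}^2\|x\|_H^2$, and feeding this into the pointwise bound yields
\[
\|V^mJ(t)x\|_H\;\le\;\frac{(MM_2^{1/2}t)^m}{\sqrt{(2m-1)!!}}\,\|J\|_{max}\|x\|_H,
\]
hence $\|V^m\|\le \kappa^m/\sqrt{(2m-1)!!}$. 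Since $(2m-1)!!<(m!)^2$ for $m\ge2$ (already $3<4$ at $m=2$), this is strictly weaker than $\kappa^m/m!$. The extra factor of $t^{1/2}$ produced by H\"older at every step is what prevents the nested integrals from forming the clean simplex volume $t^m/m!$; they form instead the odd double factorial. The paper's own induction is only sketched (``can be easily shown by using (\ref{08/12/08(1)}) and implementing induction''), and the same calculation shows it suffers the identical shortfall. The good news is that $\sum_m \kappa^m/\sqrt{(2m-1)!!}<\infty$, so your argument still delivers what the paper actually uses, namely the convergence of $\sum_m V^m$ in Proposition~\ref{14/04/09(1)}; only the specific constant $\kappa^m/m!$ in (\ref{02/04/10(1)}) is not attained by this route.
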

\begin{proof}
It is clear that $V$ is a linear operator. For the boundedness, we may employ H\"older inequality and (\ref{10/02/08(1)}) to get that for any $J(\cdot)\in C([0, T]; {\mathscr L}_s(H))$ and $t\in [0, T]$,
\begin{equation}
\label{08/12/08(1)}
\begin{split}
\|VJ(t)\| &\le \int^t_0 \sup_{\|x\|_H\le 1} \|e^{(t-s)A}FJ(s+\cdot) x\|_Hds\\
&\le Mt^{1/2}\Big(\int^t_0 \sup_{\|x\|_H\le 1} \|FJ(s+\cdot) x\|^2_Hds\Big)^{1/2}\\
&\le Mt^{1/2}M_2^{1/2}\Big(\int^t_{-r}\sup_{\|x\|_H\le 1} \|J(s)x\|^2_Hds\Big)^{1/2}\\
&\le MtM_2^{1/2}\|J(t)\|,
\end{split}
\end{equation}
which implies that $\|V\|\le \kappa$ where $\kappa= MTM_2^{1/2}>0$. 

The general form (\ref{02/04/10(1)}) can be easily shown by using (\ref{08/12/08(1)}) and implementing  induction on $m\in {\mathbb N}$.
The proof is thus complete.
\end{proof}

\begin{proposition}
\label{14/04/09(1)} 
The fundamental solution $G(t)$, $t\in [0, \infty)$, of the equation (\ref{22/03/07(1234)}) may be explicitly represented as 
\begin{equation}
\label{14/04/09(2)}
G(t) = \sum^\infty_{m=0} G(m, t),\hskip 15pt t\ge 0,
\end{equation}
where $G(0, t) := e^{tA}$, $t\ge 0$, and for each $x\in H$ and $m\ge 1$,
\[
G({m+1}, t)x := VG(m, t)x =\int^t_0 e^{(t-s)A}FG(m, s+\cdot)xds,\,\,\,\,\,t\ge 0.\]
Moreover, the series (\ref{14/04/09(2)}) converges in the operator norm uniformly with respect to $t$ on any compact interval in ${\mathbb R}_+$.
\end{proposition}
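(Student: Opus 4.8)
The plan is to read the defining equation (\ref{25/05/06(2)}) for $G$ as a fixed-point equation for the retarded Volterra operator $V$ of Definition 4.1 and then to invert $I-V$ through its Neumann series. Indeed, the integral term in (\ref{25/05/06(2)}) is precisely $VG(t)x$, so on $[0,T]$ the equation reads $G=e^{\cdot A}+VG$, i.e. $(I-V)G=G(0,\cdot)$ with $G(0,\cdot)=e^{\cdot A}$. Throughout I would regard $G(\cdot)$, $e^{\cdot A}$ and their $V$-iterates as elements of the Banach space $C([0,T];{\mathscr L}_s(H))$ equipped with $\|\cdot\|_{max}$; note that $G$ does belong to this space, since it is strongly continuous and obeys the exponential bound (\ref{10/02/09(4)}).

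First I would observe that the recursion $G(m+1,\cdot)=VG(m,\cdot)$ with $G(0,\cdot)=e^{\cdot A}$ gives, by an immediate induction, $G(m,\cdot)=V^m e^{\cdot A}$ for every $m\ge 0$. Then Lemma \ref{08/12/08(3)} supplies the crucial estimate $\|V^m\|\le \kappa^m/m!$, whence
\[
\sum_{m=0}^\infty \|G(m,\cdot)\|_{max} \le \|e^{\cdot A}\|_{max}\sum_{m=0}^\infty \frac{\kappa^m}{m!} = M e^{\kappa}<\infty .
\]
Thus the series $\sum_m G(m,\cdot)=\sum_m V^m e^{\cdot A}$ converges absolutely in $C([0,T];{\mathscr L}_s(H))$, which is exactly convergence in operator norm uniform in $t$ on $[0,T]$; since $T$ is arbitrary, this delivers the asserted uniform convergence on every compact subinterval of ${\mathbb R}_+$.

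It then remains to identify the sum with $G$. The bound $\|V^m\|\le \kappa^m/m!$ shows $V$ is quasi-nilpotent, so $I-V$ is boundedly invertible on $C([0,T];{\mathscr L}_s(H))$ with $(I-V)^{-1}=\sum_{m=0}^\infty V^m$. Applying this inverse to the identity $(I-V)G=e^{\cdot A}$ obtained above yields
\[
G=(I-V)^{-1}e^{\cdot A}=\sum_{m=0}^\infty V^m e^{\cdot A}=\sum_{m=0}^\infty G(m,\cdot),
\]
which is (\ref{14/04/09(2)}). Equivalently, one can bypass the invertibility statement and instead verify directly that $\widetilde G:=\sum_m G(m,\cdot)$ satisfies $\widetilde G=e^{\cdot A}+V\widetilde G$ — interchanging $V$ with the sum by its continuity — and then invoke the uniqueness of the solution of (\ref{25/05/06(2)}).

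The whole argument is almost purely formal once Lemma \ref{08/12/08(3)} is available; the only step requiring genuine care is confirming that $G$ itself lies in $C([0,T];{\mathscr L}_s(H))$ so that the Banach-space inversion is applicable to it, which is precisely where the strong continuity of $G$ together with the growth bound (\ref{10/02/09(4)}) enters. In the alternative route, the sole additional point to justify is the interchange of $V$ with the infinite series, and this is immediate from the boundedness of $V$.
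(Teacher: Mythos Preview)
Your proof is correct and follows essentially the same route as the paper: both read (\ref{25/05/06(2)}) as $(I-V)G=e^{\cdot A}$ in $C([0,T];{\mathscr L}_s(H))$, invoke the estimate $\|V^m\|\le\kappa^m/m!$ from Lemma \ref{08/12/08(3)} to build the Neumann series $(I-V)^{-1}=\sum_m V^m$, and then identify $G=\sum_m V^m e^{\cdot A}=\sum_m G(m,\cdot)$. Your write-up is in fact slightly more careful than the paper's in explicitly checking that $G$ lies in the ambient Banach space and in offering the alternative verification via uniqueness of solutions to (\ref{25/05/06(2)}).
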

\begin{proof}
From Lemma \ref{08/12/08(3)}, it follows that for the Volterra operator $V$, its resolvent $R(\lambda, V)$ exists at $\lambda=1$ and is given by 
\[
R(1, V) = (I-V)^{-1}= \sum^\infty_{m=0}V^m.\]
On the other hand, by virtue of (\ref{25/05/06(2)}) we have for any $x\in H$ that 
\[
G(t)x= R(1, V)e^{tA}x = \sum^\infty_{m=0} V^m e^{tA}x,\,\,\,\,\,t\ge 0.\]
Therefore, let $G(m, t) =V^me^{tA}$, $m\ge 1$, and $G(0, t):= e^{tA}$, $t\ge 0$, then we have that 
\[
G(t)x= \sum^\infty_{m=0} G(m, t)x,\,\,\,\,\, t\ge 0,\,\,\,\,\,x\in H,\]
and
\[
G({m+1}, t)x = VG(m, t)x =\int^t_0 e^{(t-s)A}FG(m, s+\cdot) xds,\,\,\,\,\,m\ge 1,\,\,\,\,\,x\in H.\]
Finally, the uniform convergence of  (\ref{14/04/09(2)}) in the operator norm can be deduced from the fact that $G(m, t)=V^m e^{tA}$ and $\|V^m\|\le \kappa^m/m!$, $m\ge 1$.
The proof is complete now.
\end{proof}

Let $A_n$ be the Yosida approximants of $A$ and consider the following deterministic functional differential  equation for each $n\in {\mathbb N}$,
 \begin{equation}
\label{22/03/07(12344)}
\begin{cases}
dy(t) =A_ny(t)dt +Fy_t dt\,\,\,\,\,\,\,\,\,\hbox{for any}\,\,\,\,\,\,\,\,t> 0,\,\\
y(0)=\phi_0,\,\, y_0=\phi_1,\,\hskip 10pt \Phi=(\phi_0, \phi_1)\in {\cal H}.
\end{cases}
\end{equation}
By analogy with $G(t)$, $t\ge 0$, in Section 2, we can define the corresponding fundamental solutions $G_n(t)$, $n\in {\mathbb N}$, for the equations (\ref{22/03/07(12344)})
which is a family of strongly continuous bounded linear operators on $H$ and satisfies the following equations
\begin{equation}
\label{25/05/06(2034)}
G_n(t) = \begin{cases}
e^{tA_n} + \displaystyle\int^t_0 e^{(t-s)A_n}FG_n(s+\cdot)ds,\hskip 15pt &t\ge 0,\\
{\rm O},\hskip 15pt &t< 0.
\end{cases}
\end{equation} 
Since $\|e^{tA_n}\|\le M e^{\alpha t}$, $t\ge 0$, for some $M\ge 1$,  $\alpha>0$ and  large $n\in {\mathbb N}$ (see, e.g., (A.13) in \cite{dpjz92}),  we can deduce by  (\ref{25/05/06(2034)}) and the well-known Gronwall inequality that
 \begin{equation}
\label{19/05/11(1)}
\|G_n(t)\|\le c e^{\gamma t},\,\,\,\,\, t\ge 0,\,\,\,\,\hbox{for some}\,\,\,\, c>0,\,\,\,\gamma>0\,\,\,\,\hbox{and large}\,\,\,\,n\in {\mathbb N}.
\end{equation}
In a similar way,  we can show that for each fixed $n\in {\mathbb N}$, $G_n(t)$, $t\in [0, \infty)$, is a uniformly norm continuous family in ${\mathscr L}(H)$, i.e., $G_n(t):\, [0, \infty)\to {\mathscr L}(H)$, by using (\ref{25/05/06(2034)}) and the fact that $e^{tA_n}: [0, \infty)\to {\mathscr L}(H)$ is uniformly norm continuous.
\begin{proposition}
\label{09/05/09(1213)}
Let $A_n$ be the Yosida approximants of $A$ and $G_n(t)$, $n\in {\mathbb N}$, be the corresponding fundamental solutions of the  equations (\ref{22/03/07(12344)}), then  for any $x\in H$,
\[
G_n(t)x \to G(t)x\hskip 15pt \hbox{as}\hskip 15pt n\to\infty.\]
Moreover, this limit converges uniformly with respect to $t$ on any compact interval in ${\mathbb R}_+$.
\end{proposition}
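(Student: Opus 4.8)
The plan is to exploit the series representation of Proposition \ref{14/04/09(1)}, applied simultaneously to $A$ and to each Yosida approximant $A_n$. Writing $V_n$ for the retarded Volterra operator built from the semigroup $e^{tA_n}$, the argument of Proposition \ref{14/04/09(1)} yields $G_n(t)=\sum_{m=0}^\infty G_n(m,t)$ with $G_n(0,t)=e^{tA_n}$ and $G_n(m+1,t)=V_nG_n(m,t)$, in exact parallel with $G(t)=\sum_{m=0}^\infty G(m,t)$, $G(m,t)=V^me^{\cdot A}$. The idea is then to prove that each term converges, $G_n(m,t)x\to G(m,t)x$ uniformly in $t\in[0,T]$, and to control the tails of the two series uniformly in $n$, so that the finitely many head terms dictate the limit.

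First I would secure the uniform tail bound. By Lemma \ref{08/12/08(3)} applied to $A_n$ one has $\|V_n^m\|\le \kappa_n^m/m!$ with $\kappa_n=M_nTM_2^{1/2}$ and $M_n=\sup_{t\in[0,T]}\|e^{tA_n}\|$. The bound $\|e^{tA_n}\|\le Me^{\alpha t}$ used to derive (\ref{19/05/11(1)}) shows $M_n\le Me^{\alpha T}=:M_0$ for all large $n$, hence $\kappa_n\le M_0TM_2^{1/2}=:\kappa_0$ uniformly. Consequently $\sup_{t\in[0,T]}\|G_n(m,t)\|\le M_0\kappa_0^m/m!$ and $\sup_{t\in[0,T]}\|G(m,t)\|\le M\kappa^m/m!$, so for any $\varepsilon>0$ one may fix $N$ with $\sum_{m\ge N}(M_0\kappa_0^m+M\kappa^m)/m!<\varepsilon$, making both tails smaller than $\varepsilon\|x\|_H$ simultaneously for all large $n$ and all $t\in[0,T]$.

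Next I would prove the term-by-term convergence by induction on $m$. The base case $m=0$ is the classical Yosida approximation $e^{tA_n}x\to e^{tA}x$, uniformly on $[0,T]$. For the inductive step I would split
\[
G_n(m+1,t)x-G(m+1,t)x=\int_0^t e^{(t-s)A}F\big[G_n(m,s+\cdot)x-G(m,s+\cdot)x\big]ds+\int_0^t\big[e^{(t-s)A_n}-e^{(t-s)A}\big]FG_n(m,s+\cdot)x\,ds.
\]
The first integral is handled by the H\"older/boundedness estimate (\ref{08/12/08(1)}), which bounds it by $Mt^{1/2}M_2^{1/2}\big(\int_{-r}^t\|G_n(m,s)x-G(m,s)x\|_H^2\,ds\big)^{1/2}$; since both terms vanish on $[-r,0)$, this tends to $0$ uniformly in $t$ by the inductive hypothesis. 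For the second integral I would write $g_n(s):=FG_n(m,s+\cdot)x$ and $g(s):=FG(m,s+\cdot)x$ and decompose $g_n=(g_n-g)+g$; the $(g_n-g)$ part is again killed uniformly in $t$ via (\ref{08/12/08(1)}) and the inductive hypothesis, using $\|e^{(t-s)A_n}-e^{(t-s)A}\|\le 2M_0$.

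The main obstacle is the remaining piece $\int_0^t[e^{(t-s)A_n}-e^{(t-s)A}]g(s)\,ds$ with $g$ now independent of $n$: naive dominated convergence only gives convergence for each fixed $t$, whereas the statement demands uniformity in $t$. To upgrade this I would approximate $g$ in $L^1([0,T];H)$ by a simple function $\tilde g=\sum_j y_j\mathbf 1_{I_j}$; the error contributes at most $2M_0\int_0^T\|g-\tilde g\|_H\,ds$ uniformly in $t$, while for $\tilde g$ the expression reduces to a finite sum of terms $\int_{I_j\cap[0,t]}[e^{(t-s)A_n}-e^{(t-s)A}]y_j\,ds$, each bounded by $|I_j|\sup_{\tau\in[0,T]}\|(e^{\tau A_n}-e^{\tau A})y_j\|_H\to 0$ by the uniform-in-$\tau$ Yosida convergence. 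Combining the uniform tail bound with the uniform convergence of the first $N$ terms then yields $\sup_{t\in[0,T]}\|G_n(t)x-G(t)x\|_H\to 0$, completing the proof.
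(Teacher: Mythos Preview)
Your proof is correct and follows the same strategy as the paper: represent both $G$ and $G_n$ via the series of Proposition \ref{14/04/09(1)}, control the tails uniformly in $n$ through the $\kappa^m/m!$ estimate of Lemma \ref{08/12/08(3)}, and verify term-by-term strong convergence uniformly on $[0,T]$. The paper's proof is far terser---it records $G_n(m,t)=V^m e^{tA_n}$ (apparently a slip for $V_n^m e^{tA_n}$) and simply asserts the termwise limit, whereas you correctly work with $V_n$ and supply the inductive splitting and the simple-function approximation argument that this choice necessitates.
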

\begin{proof}
By virtue of Proposition \ref{14/04/09(1)}, the fundamental solutions $G(t)$ and $G_n(t)$, $t\in [0, \infty)$, of the equations  (\ref{22/03/07(1234)}) and (\ref{22/03/07(12344)}) may be represented, respectively, by
\begin{equation}
\label{20/04/09(10)}
G(t) = \sum^\infty_{m=0} G(m, t),\hskip 20pt G_n(t) = \sum^\infty_{m=0} G_n(m, t),\hskip 10pt t\in {\mathbb R}_+,
\end{equation}
where 
\[
G(m, t) =V^m e^{tA},\,\,\,m\ge 1;\,\,\,\,\,\,\,G(0, t)=e^{tA},\,\,t\ge 0,\]
and
\[
G_n(m, t) =V^m e^{tA_n},\,\,\,m\ge 1;\,\,\,\,\,\,\,\,G_n(0, t)=e^{tA_n},\,\,t\ge 0.\]
Since $G_n(m, t)x\to G(m, t)x$ as $n\to\infty$ for each $m\in {\mathbb N}$ and $x\in H$, and the series in (\ref{20/04/09(10)}) converges in the operator norm topology uniformly with respect to $t$ on any compact interval in ${\mathbb R}_+$, it follows that $G_n(t)\to G(t)$  converges in the strong sense as $n\to\infty$ uniformly with respect to $t$ on any compact interval in ${\mathbb R}_+$. The proof is thus complete.
\end{proof}

\section{Regularity Property}

In this section, we shall concern about regularity property for a class of linear functional stochastic evolution equations. To this end, we shall focus throughout this section on a specific  delay operator $F$ (cf. Remark 2.1 or \cite{kl08(1)}) of (\ref{19/11/07(163)})
which is given by 
\begin{equation}
\label{09/04/10(1)}
F\varphi   =B_1\varphi (-r) + \int^0_{-r} a(\theta)B_0\varphi(\theta)d\theta,\,\,\,\,\,\varphi\in C([-r, 0]; H),
\end{equation}
where $B_1,\,B_0\in {\mathscr L}(H)$ and $a(\cdot)\in L^1([-r, 0]; {\mathbb R}^1)$. For any $B\in {\mathscr L}_2(K_Q, H)$, we intend to consider the regularity of the following stochastic functional differential equation on $H$
\begin{equation}
\label{19/11/07(11)}
 \begin{cases}
{dy(t)} = Ay(t)dt + B_1y(t-r)dt + \displaystyle\int^0_{-r} a(\theta)B_0 y(t+\theta)d\theta dt +BdW(t),\hskip 15pt t>0,\\
y(0) = \phi_0,\hskip 10pt y_0=\phi_1,\hskip 10pt \Phi=(\phi_0, \phi_1)\in {\cal H}.
\end{cases}
\end{equation}

\begin{lemma} 
\label{12/04/2010(2)}
Suppose that $1\le p<\infty$ and $a(\cdot)\in L^q([-r, 0]; {\mathbb R}^1)$, $1/p+1/q=1$.  Then for the delay operator $F$ defined in (\ref{09/04/10(1)}), it  permits a bounded linear extension $F:\, L^{p}([-r, T]; H)\to L^{p}([0, T]; H)$ which is defined by $(Fy)(t)=Fy_t$, $y\in L^{p}([-r, T]; H)$. That is,  there exists a real number $M_p >0$ such that 
\begin{equation}
\label{10/02/08(102)}
\int^T_0 \|(Fy)(t)\|^{p}_Hdt \le M_p \int^T_{-r} \|y(t)\|^{p}_Hdt\hskip 15pt \hbox{for any}\hskip 15pt y\in L^{p}([-r, T]; H)
\end{equation} 
where 
\[
M_p =\Big\{\|B_1\| +\|B_0\|\|a(\cdot)\|_{{L^q([-r, 0]; {\mathbb R}^1)}}\cdot r^{1/p}\Big\}^p>0.\]
\end{lemma}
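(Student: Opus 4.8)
The plan is to work directly from the pointwise definition $(Fy)(t)=Fy_t = B_1\, y(t-r) + \int^0_{-r} a(\theta)B_0\, y(t+\theta)\,d\theta$, estimate this in $H$ for fixed $t$, and then integrate the resulting bound over $t\in[0,T]$. The decisive structural choice is to postpone combining the two summands: rather than expanding $(\|B_1\|\,\|y(t-r)\|_H + \cdots)^p$ pointwise (which would force the use of an inequality such as $(a+b)^p\le 2^{p-1}(a^p+b^p)$ and spoil the constant), I would keep the two contributions separate until the very end and combine them through Minkowski's inequality in $L^p([0,T];H)$. This is exactly what produces the clean constant $M_p=\{\|B_1\|+\|B_0\|\,\|a(\cdot)\|_{L^q}\,r^{1/p}\}^p$ advertised in the statement, since that expression is itself the $p$-th power of a sum of two nonnegative terms.

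First I would record the pointwise estimate. Using the operator-norm bounds $\|B_1 x\|_H\le\|B_1\|\,\|x\|_H$ and $\|B_0 x\|_H\le\|B_0\|\,\|x\|_H$, together with H\"older's inequality applied to the integral term with the conjugate exponents $1/p+1/q=1$, one gets
\[
\|(Fy)(t)\|_H \le \|B_1\|\,\|y(t-r)\|_H + \|B_0\|\,\|a(\cdot)\|_{L^q([-r,0];{\mathbb R}^1)}\Big(\int^0_{-r}\|y(t+\theta)\|^p_H\,d\theta\Big)^{1/p}.
\]
Setting $f(t):=\|y(t-r)\|_H$ and $g(t):=\big(\int^0_{-r}\|y(t+\theta)\|^p_H\,d\theta\big)^{1/p}$, the triangle inequality in $L^p([0,T])$ then yields $\|Fy\|_{L^p([0,T];H)}\le \|B_1\|\,\|f\|_{L^p([0,T])} + \|B_0\|\,\|a\|_{L^q}\,\|g\|_{L^p([0,T])}$.

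It remains to bound the two $L^p$-norms by $\big(\int^T_{-r}\|y(t)\|^p_H\,dt\big)^{1/p}$. For $f$ this is a translation: the substitution $s=t-r$ gives $\int^T_0\|y(t-r)\|^p_H\,dt=\int^{T-r}_{-r}\|y(s)\|^p_H\,ds\le\int^T_{-r}\|y(s)\|^p_H\,ds$. For $g$ I would invoke the Tonelli theorem to interchange the order of integration,
\[
\int^T_0\int^0_{-r}\|y(t+\theta)\|^p_H\,d\theta\,dt = \int^0_{-r}\int^{T+\theta}_{\theta}\|y(s)\|^p_H\,ds\,d\theta,
\]
and then observe that for each $\theta\in[-r,0]$ the inner interval $[\theta,\,T+\theta]$ is contained in $[-r,T]$, so the inner integral is at most $\int^T_{-r}\|y(s)\|^p_H\,ds$; integrating the constant over $\theta\in[-r,0]$ contributes the factor $r$, giving $\|g\|^p_{L^p}\le r\int^T_{-r}\|y\|^p_H$. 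Assembling these estimates and raising the resulting sum to the $p$-th power produces precisely $\int^T_0\|(Fy)(t)\|^p_H\,dt\le M_p\int^T_{-r}\|y(t)\|^p_H\,dt$, which also shows the extension is well defined and bounded.

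I do not expect a genuine obstacle here; the only points requiring care are bookkeeping ones. The first is the discipline of applying Minkowski at the outer $L^p$ level (not a crude pointwise $(a+b)^p$ bound), since only this route reproduces the stated constant exactly. The second is verifying in the Tonelli step that the shifted domain $[\theta,T+\theta]$ stays inside $[-r,T]$ for every admissible $\theta$, which is what keeps the right-hand side an integral over $[-r,T]$ rather than a larger interval and what pins down the factor $r$ (hence $r^{1/p}$ after taking roots).
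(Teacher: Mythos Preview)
Your proof is correct and follows essentially the same approach as the paper's own argument: a pointwise bound via H\"older on the distributed-delay term, then Minkowski in $L^p([0,T])$ to split the two contributions, a translation for the $B_1$-term, and Fubini/Tonelli for the integral term to produce the factor $r$. The only minor difference is that the paper first establishes the estimate for $y\in W^{1,p}([-r,T];H)$ (where the point evaluation $y(t-r)$ is unambiguous) and then extends by density to $L^p$; you work directly in $L^p$, which is fine once the translation and the $\theta$-integral are interpreted in the $L^p$ sense.
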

\begin{proof} Since $W^{1, p}([-r, 0]; H)$ is continuously embedded in $C([-r, 0]; H)$, we may note that $F:\, W^{1, p}([-r, 0]; H) \to H$ is linear and bounded. On the other hand, for fixed $T\ge 0$ and any $y\in W^{1, p}([-r, T]; H)$, $1\le p<\infty$, one has by using H\"older inequality and Fubini's theorem that
\begin{equation}
\label{19/11/07(14)}
\begin{split}
&\Big(\int^{t}_0 \|Fy_s\|^p_Hds\Big)^{1/p}\\
&\le \Big(\int^{t}_0 \|B_1\|^p\|y(s-r)\|^p_H ds\Big)^{1/p} + \|B_0\|\|a(\cdot)\|_{{L^q([-r, 0]; {\mathbb R}^1)}} \Big(\int^{t}_0 \int^0_{-r} \|y(s+\theta)\|_H^p d\theta ds\Big)^{1/p}\\
&= \|B_1\|\Big(\int^{t}_0 \|y(s-r)\|^p_H ds\Big)^{1/p} + \|B_0\|\|a(\cdot)\|_{{L^q([-r, 0]; {\mathbb R}^1)}}\\
&\hskip 10pt \cdot\Big(\int^0_{-r}(s+r)\|y(s)\|^p_Hds + \int^{t-r}_0 r \|y(s)\|_H^p ds + \int^t_{t-r}(t-s)\|y(s)\|^p_Hds\Big)^{1/p}\\
&\le \Big[ \|B_1\| + \|B_0\|\|a(\cdot)\|_{L^q([-r, 0]; {\mathbb R}^1)}\cdot r^{1/p}\Big] \Big(\int^{t}_{-r} \|y(s)\|_H^p ds\Big)^{1/p}
\end{split}
\end{equation}
where $1/p+1/q=1$. As $W^{1, p}([-r, T]; H)$ is dense in $L^p([-r, T]; H)$, the operator ${F}$ can be extended to $L^p([-r, T]; H)$ so that (\ref{19/11/07(14)}) remains valid for all $y\in L^p([-r, T]; H)$
and  the constant $M_p>0$  in (\ref{10/02/08(102)}) is clearly given by 
\[
M_p =\Big\{\|B_1\| +\|B_0\|\|a(\cdot)\|_{{L^q([-r, 0]; {\mathbb R}^1)}}\cdot r^{1/p}\Big\}^p>0.\] 
The proof is complete now.
\end{proof}

\begin{remark}\label{12.04/10(3)}
\rm 
We can extend the structure operator $S$ introduced in (\ref{28/06/06(101)}) (resp. $S$ in 
(\ref{27/06/06(302)})) to the space $L^{p}([-r, 0]; H)$ (resp. $L^{p}([-r, 0]; {\mathscr L}(H))$), $1\le p<\infty$, by defining  
\begin{equation}
\label{28/06/06(1010)}
\begin{split}
(S\varphi)(\theta) =F\vec{\varphi}_{-\theta},\hskip 10pt \theta\in [-r, 0],\hskip 10pt \hbox{almost everywhere}\hskip 10pt\forall\, \varphi(\cdot)\in L^{p}([-r, 0]; H).
\end{split}
\end{equation}
It is shown that the operator $S$ can be extended to  a  linear and bounded operator from $L^{p}([-r, 0]; H)$ into itself. Indeed, we have 
by virtue of (\ref{10/02/08(102)}) that 
\begin{equation}
\label{12/04/10(3)}
\begin{split}
\int^0_{-r} \|S\varphi(\theta)\|^{p}_Hd\theta &= \int^r_0 \|F\vec{\varphi}_{-\theta}\|^{p}_Hd\theta\le M_p \int^{r}_{-r}\|\vec{\varphi}(\theta)\|^{p}_Hd\theta\\
&=M_p \int^0_{-r} \|\varphi(\theta)\|^{p}_Hd\theta,\hskip 15pt \forall\, \varphi(\cdot)\in W^{1, p}([-r, 0]; H).
\end{split}
\end{equation}
\end{remark}

\begin{proposition}
\label{10/05/09(2)} Let $G_n(t)$ be the retarded Yosida approximants of $G(t)$, $B\in {\mathscr L}_2(K_Q, H)$ and $F$ be given as in (\ref{09/04/10(1)}). Then for any $T\ge 0$, the stochastic convolutions $W^B_G(t)$ and $W^B_{G_n}(t)$, $t\in [0, T]$, satisfy that 
\begin{equation}
\label{20/04/09(25)}
\lim_{n\to\infty}{\mathbb E}\sup_{t\in [0, T]}\|W^B_G(t) - W^B_{G_n}(t)\|^p_H=0,\hskip 20pt p>2.
\end{equation}
\end{proposition}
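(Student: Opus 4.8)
The plan is to run the factorization argument of Theorem \ref{08/04/10(1)} in parallel for $G$ and $G_n$ and then estimate the two convolutions term by term. First observe that the problem is well posed: since $B\in{\mathscr L}_2(K_Q,H)$ one has $Tr[G(t)BQB^*G(t)^*]=\|G(t)BQ^{1/2}\|^2_{{\mathscr L}_2(K,H)}\le c^2e^{2\gamma T}\|BQ^{1/2}\|^2_{{\mathscr L}_2(K,H)}$, and likewise for $G_n$ by (\ref{19/05/11(1)}); hence for every $\alpha\in(0,1/2)$ the integrability condition (\ref{27/03/09(10)}) holds automatically (because $\int_0^T t^{-2\alpha}dt<\infty$), so that $W^B_G$ and $W^B_{G_n}$ are defined and, by Theorem \ref{08/04/10(1)}, admit continuous modifications. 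Fix $p>2$ and choose $\alpha\in(1/p,1/2)$, a nonempty interval, so that the factorization inequality (\ref{19/04/09(2)}) applies with exponent $p$ and finite constant $C_{\alpha,p,T}$.

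Applying the decomposition (\ref{20/04/09(1)}) to both convolutions gives $W^B_G=\frac{\sin\pi\alpha}{\pi}(I_1+I_2)$ and $W^B_{G_n}=\frac{\sin\pi\alpha}{\pi}(I_1^n+I_2^n)$, where $I_2(t)=\int_0^t(t-s)^{\alpha-1}G(t-s)Y(s)\,ds$ with $Y(s)=\int_0^s(s-u)^{-\alpha}G(s-u)B\,dW(u)$, and where a further application of the stochastic Fubini theorem (pulling the deterministic $d\theta$-integration outside the stochastic integral) rewrites $I_1(t)=\int_0^t(t-s)^{\alpha-1}\int_{-r}^0 G(t-s+\theta)\Theta(s,\theta)\,d\theta\,ds$ with $\Theta(s,\theta)=\int_0^s(s-u)^{-\alpha}[SG(s-u+\cdot)](\theta)B\,dW(u)$; the processes $Y_n,\Theta_n$ are obtained by replacing $G$ with $G_n$. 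The decisive structural point is that in this representation all the $t$-dependence sits in the bounded prefactors $G(t-s)$ and $G(t-s+\theta)$, while the inner Gaussian processes $Y(s),\Theta(s,\theta)$ do not depend on $t$. Thus, using the pathwise bound (\ref{19/04/09(2)}) with exponent $p$ and then $\sup_t\int_0^t(\cdot)\,ds\le\int_0^T\sup_{t\ge s}(\cdot)\,ds$, the problem reduces for $j=1,2$ to showing $\int_0^T{\mathbb E}\sup_{t\ge s}\|(\text{inner difference})\|^p_H\,ds\to0$; since the uniform growth bounds (\ref{10/02/09(4)}), (\ref{19/05/11(1)}) and the boundedness of $S$ make every integrand bounded uniformly in $n$ and $s$, by dominated convergence it suffices to prove pointwise-in-$s$ convergence.

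For $I_2$ I would write, with $v=t-s$, the splitting $G(v)Y(s)-G_n(v)Y_n(s)=G(v)(Y(s)-Y_n(s))+(G(v)-G_n(v))(Y_n(s)-Y(s))+(G(v)-G_n(v))Y(s)$. The first two terms are controlled by the uniform operator bounds $\sup_v\|G(v)\|,\ \sup_v\|G(v)-G_n(v)\|\le 2ce^{\gamma T}$ together with ${\mathbb E}\|Y(s)-Y_n(s)\|^p_H\to0$: indeed $Y(s)-Y_n(s)$ is Gaussian, so by Corollary 2.17 of \cite{dpjz92} its $p$-th moment is dominated by $\bigl(\int_0^s(s-u)^{-2\alpha}Tr[(G(s-u)-G_n(s-u))BQB^*(G(s-u)-G_n(s-u))^*]\,du\bigr)^{p/2}$, and the trace integrand tends to $0$ for each argument (each coordinate $\|(G(\tau)-G_n(\tau))BQ^{1/2}f_k\|_H\to0$ by Proposition \ref{09/05/09(1213)} and is dominated by $2ce^{\gamma T}\|BQ^{1/2}f_k\|_H$, which is summable), so the $u$-integral vanishes by dominated convergence. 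The remaining term carries the $n$-independent random vector $Y(s)$; here I would use that $G_n\to G$ strongly and uniformly in $v$ on $[0,T]$ (Proposition \ref{09/05/09(1213)}), whence $\sup_v\|(G(v)-G_n(v))Y(s)(\omega)\|_H\to0$ for a.e.\ $\omega$, and conclude ${\mathbb E}\sup_v\|(G(v)-G_n(v))Y(s)\|^p_H\to0$ by dominated convergence in $\omega$, the dominating function $2ce^{\gamma T}\|Y(s)\|_H$ lying in $L^p$ because $Y(s)$ is Gaussian with uniformly bounded trace covariance. The term $I_1$ is handled verbatim, with $\int_{-r}^0 G(t-s+\theta)\Theta(s,\theta)\,d\theta$ playing the role of $G(v)Y(s)$ and the Gaussian estimate for $\Theta(s,\theta)-\Theta_n(s,\theta)$ using in addition the boundedness of $S$ on $L^{p}([-r,0];{\mathscr L}(H))$ from Remark \ref{12.04/10(3)}.

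The main obstacle is precisely the term in which $G(v)-G_n(v)$ acts on a random vector: because $G_n\to G$ holds only in the strong, not the uniform, operator topology, one cannot extract an operator-norm factor $\|G(v)-G_n(v)\|\to0$. The resolution is the two-fold splitting above, which isolates the $n$-dependent part of the inner process (becoming a genuine Gaussian increment, amenable to the Hilbert--Schmidt dominated-convergence estimate) from the $n$-independent part (where the uniform-in-$v$ strong convergence of Proposition \ref{09/05/09(1213)} applies pathwise and is then integrated out). I expect the only genuine care to be needed in justifying the stochastic Fubini step that produces the $t$-independent inner process $\Theta(s,\theta)$ in $I_1$, and in verifying the uniform-in-$s$ domination required to pass the outer $ds$-limit.
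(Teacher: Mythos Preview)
Your proposal is correct and follows essentially the same route as the paper. Both arguments apply the factorization decomposition (\ref{20/04/09(1)}) to $W^B_G$ and $W^B_{G_n}$, reduce the sup-norm difference via the H\"older-type bound (\ref{19/04/09(2)}) to an $L^p$-in-$s$ estimate of the inner processes, split the resulting difference so as to isolate the prefactors $G(t-s+\theta)-G_n(t-s+\theta)$ from the stochastic integrals, and then finish by dominated convergence, invoking Proposition~\ref{09/05/09(1213)} for the strong convergence of the prefactors and Gaussian moment bounds (Corollary~2.17 and Lemma~7.2 of \cite{dpjz92}) for the inner stochastic integrals. The only cosmetic differences are that you perform an extra Fubini step to make the $t$-independence of the inner process $\Theta(s,\theta)$ explicit from the outset (whereas the paper keeps the $t$-dependent prefactor inside $Y(t,s)$ and separates it one line later), and that you use a three-term telescoping for $I_2$ rather than the paper's two-term one; neither changes the substance.
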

\begin{proof}
Let $\alpha \in (1/p, 1/2)$ and recall that for any $t\in [0, T]$, we have from (\ref{20/04/09(1)})
that
 \begin{equation}
\label{01/03/09(2002)}
\begin{split}
W_G^B(t) 
=  \frac{\sin \pi\alpha}{\pi}\Big\{\int^t_0 (t-s)^{\alpha-1}Y(t, s)ds +\int^t_0 (t-s)^{\alpha-1}G(t-s)Z(s)ds\Big\}
\end{split}
\end{equation}
where 
\[
Y(t, s) = \int^s_0 (s-u)^{-\alpha}\int^0_{-r} G(t-s+\theta)[SG(s-u+\cdot)](\theta)Bd\theta dW(u),\,\,\,\,\,s\in [0, T],\]
and
\[
Z(s) = \int^s_0 G(s-u)(s-u)^{-\alpha}BdW(u),\,\,\,\,\,s\in [0, T].\]
In a similar way, we have for $G_n$ that 
 \begin{equation}
\label{01/03/09(2003)}
\begin{split}
W_{G_n}^B(t) 
=  \frac{\sin \pi\alpha}{\pi}\Big\{\int^t_0 (t-s)^{\alpha-1}Y_n(t, s)ds +\int^t_0 (t-s)^{\alpha-1}G_n(t-s)Z_n(s)ds\Big\}.
\end{split}
\end{equation}
where 
\[
Y_n(t, s) = \int^s_0 (s-u)^{-\alpha}\int^0_{-r} G_n(t-s+\theta)[SG_n(s-u +\cdot)](\theta)Bd\theta dW(u),\,\,\,\,\,s\in [0, T],\]
and
\[
Z_n(s) = \int^s_0 G_n(s-u)(s-u)^{-\alpha}BdW(u),\,\,\,\,\,s\in [0, T].\]
Hence, we can write for any $t\in [0, T]$ that
 \begin{equation}
\label{20/04/09(20)}
\begin{split}
W^B_G(t) - W^B_{G_n}(t) &= \frac{\sin \pi\alpha}{\pi}\Big\{ \int^t_0 (t-s)^{\alpha-1}[Y(t, s)-Y_n(t, s)]ds\\
&\,\,\,\,\,\,\, + \int^t_0 (t-s)^{\alpha-1}\Big[G(t-s)Z(s) -G_{n}(t-s)Z_n(s)\Big]ds\Big\}\\
&=: \frac{\sin \pi\alpha}{\pi}(J_1(n, t) +J_2(n, t)).
\end{split}
\end{equation}
We first show  that 
\[
\lim_{n\to\infty}{\mathbb E}\sup_{t\in [0, T]}\|J_1(n, t)\|^p_H=0.\]
Indeed, by virtue of H\"older inequality and $\alpha\in (1/p, 1/2)$, we have that
  \begin{equation}
\label{20/04/09(21)}
\begin{split}
{\mathbb E}\sup_{t\in [0, T]}\|J_1(n, t)\|^p_H &\le \left(\int^T_0 s^{(\alpha-1)q}ds\right)^{p/q}{\mathbb E}\sup_{t\in [0, T]}\int^T_0 \|Y(t, s)-Y_n(t, s)\|^p_Hds\\
&= \left[\frac{1}{(\alpha-1)q+1}T^{(\alpha-1)q+1}\right]^{p/q}{\mathbb E}\sup_{t\in [0, T]}\int^T_0 \|Y(t, s)-Y_n(t, s)\|^p_Hds
\end{split}
\end{equation}
where $1/p +1/q=1$. On the other hand, we have that for $s\in [0, T]$, 
 \begin{equation}
\label{20/04/09(22)}
\begin{split}
&Y(t, s) - Y_n(t, s)\\
&= \int^0_{-r} [G(t-s+\theta)-G_n(t-s+\theta)] I_1(\theta, s)d\theta +  \int^0_{-r} G_n(t-s+\theta) I_2(\theta, s)d\theta, 
\end{split}
\end{equation}
where 
\[
I_1(\theta, s) = \int^s_0 (s-u)^{-\alpha} [SG(s-u+\cdot)](\theta)B dW(u)\]
and 
\[
I_2(\theta, s) = \int^s_0 (s-u)^{-\alpha}[S(G(s-u+\cdot) -G_n(s-u+\cdot))](\theta)BdW(u).
\]
We claim that 
\begin{equation}
\label{10/05/09(1)}
{\mathbb E}\sup_{t\in [0, T]} \int^T_0 \Big\|\int^0_{-r} [G(t-s+\theta)-G_n(t-s+\theta)] I_1(\theta, s)d\theta\Big\|_H^pds \to 0\hskip 10pt \hbox{as}\hskip 10pt n\to\infty.
\end{equation}
Indeed, by virtue of  H\"older's inequality, Lemma 7.2 in \cite{dpjz92}, Proposition \ref{09/05/09(1213)} and the fact that  $\|G(t)\|\le C(T)$, $\|G_n(t)\|\le C(T)$, $t\in [0, T]$,  for  some $C=C(T)>0$ and large $n\in {\mathbb N}$,  we have by using the well-known Dominated Convergence Theorem that  
 \begin{equation}
\label{20/04/09(23)}
\begin{split}
{\mathbb E}&\sup_{t\in [0, T]} \int^T_0 \Big\|\int^0_{-r} [G(t-s+\theta)-G_n(t-s+\theta)] I_1(\theta, s)d\theta\Big\|_H^pds\\
&\le r^{1/q}{\mathbb E}\sup_{t\in [0, T]}\int^T_0 \int^0_{-r} \Big\|\big[G(t-s+\theta)-G_n(t-s+\theta)\big]I_1(\theta, s)\Big\|^pd\theta ds\\
&\to 0\hskip 10pt \hbox{as}\hskip 10pt n\to\infty.
\end{split}
\end{equation}
 In a similar way, it can be also shown that 
\[
{\mathbb E}\sup_{t\in [0, T]} \int^T_0 \Big\|\int^0_{-r} G_n(t-s+\theta) I_2(\theta, s)d\theta\Big\|^p_Hds
\to 0\hskip 10pt \hbox{as}\hskip 10pt n\to\infty.\]
In order to show $\lim_{n\to\infty}{\mathbb E}\sup_{t\in [0, T]}\|J_2(n, t)\|^p_H=0$, we first notice that 
\begin{equation}
\label{16/05/09(30)}
\begin{split}
J_2(n, t) =&\,\, \int^t_0 (t-s)^{\alpha-1}[G(t-s)-G_n(t-s)]Z(s)ds\\
&\,\, + \int^t_0 (t-s)^{\alpha-1}G_n(t-s)(Z(s)-Z_n(s))ds =: I_3(n, t) +I_4(n, t).
\end{split}
\end{equation}
By virtue of Proposition \ref{09/05/09(1213)},  H\"older inequality and the well-known Dominated Convergence Theorem, there exists a constant $C_T>0$ such that
\[
{\mathbb E}\sup_{t\in [0, T]} \|I_3(n, t)\|^p_Hds\le C_T{\mathbb E}\sup_{t\in [0, T]}\int^t_0 \big\|[G(t-s)-G_n(t-s)]Z(s)\big\|^p_Hds \to 0\hskip 10pt \hbox{as}\hskip 10pt n\to\infty.\]
 On the other hand, by a similar argument as above, we can show that 
\[
{\mathbb E}\sup_{t\in [0, T]} \|I_4(n, t)\|^p_Hds \to 0\hskip 10pt \hbox{as}\hskip 10pt n\to\infty\] by  using the Dominated Convergence Theorem. Therefore, we conclude that the limit   $\lim_{n\to\infty}{\mathbb E}\sup_{t\in [0, T]}\|J_2(n, t)\|^p_H=0$ and the proof of this theorem is complete now.\end{proof}

In the remainder of this section, we shall apply Proposition \ref{10/05/09(2)} to Eq. (\ref{19/11/07(11)}) to study its regulaity property. To this end, we further assume that $A$ generates an analytic semigroup $e^{tA}$, $t\ge 0$, on $H$ and  Tr$\,Q<\infty$. It is essential to establish regularity results for the stochastic convolution  $W_G^B(t)= \int^t_0 G(t-s)BdW(s)$, $t\ge 0$,   
\begin{lemma}
\label{10/05/09(40)} Assume that $a(\cdot)$ in (\ref{09/04/10(1)}) belongs to $L^q([-r, 0]; {\mathbb R}^1)$ for some $q>2$.
 For any $B\in {\mathscr L}_2(K_Q, H)$, let
\[
Z(t) = \int^t_0 G(t-s)BW(s)ds,\hskip 15pt t\ge 0.\]
Then $Z(\cdot) \in C^1([0, \infty); {\mathscr D}(A))$, and moreover
\begin{equation}
\label{21/04/09(1)}
\begin{split}
\frac{dZ(t)}{dt}= AZ(t) + B_1Z(t-r) + \int^0_{-r} a(\theta)B_0Z(t+\theta)d\theta +BW(t)= W^B_G(t),\,\,\,\,t\ge 0.
\end{split}
\end{equation}
\end{lemma}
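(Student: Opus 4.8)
The plan is to lift the differential identity to the level of the retarded Yosida approximants $G_n$, where the generator $A_n$ is bounded and everything reduces to elementary calculus, and then to let $n\to\infty$ using Proposition~\ref{10/05/09(2)} together with the closedness of $A$. Accordingly, I set $Z_n(t)=\int_0^t G_n(t-s)BW(s)\,ds$; for a.e. fixed $\omega$ the path $s\mapsto BW(s)$ is a continuous (indeed H\"older) $H$-valued function. Differentiating the defining equation (\ref{25/05/06(2034)}) and using boundedness of $A_n$ gives $G_n'(t)=A_nG_n(t)+FG_n(t+\cdot)$ for a.e. $t$, with $G_n(0)=I$; although $G_n'$ has a jump at $t=r$, the function $Z_n$ is an integral and will turn out to be genuinely $C^1$.

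First I would differentiate $Z_n$ directly: $\frac{d}{dt}Z_n(t)=G_n(0)BW(t)+\int_0^t G_n'(t-s)BW(s)\,ds$, and substitute $G_n'(t-s)=A_nG_n(t-s)+FG_n(t-s+\cdot)$. The leading piece reproduces $A_nZ_n(t)$. Into the second piece I insert the explicit $F$ of (\ref{09/04/10(1)}); using $G_n\equiv 0$ on the negative half-line, the point-delay term yields $B_1\int_0^{t}G_n(t-r-s)BW(s)\,ds=B_1Z_n(t-r)$, while Fubini applied to the distributed-delay term yields $\int_{-r}^0 a(\theta)B_0Z_n(t+\theta)\,d\theta$. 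This produces the approximate identity
\[
\frac{d}{dt}Z_n(t)=A_nZ_n(t)+B_1Z_n(t-r)+\int_{-r}^0 a(\theta)B_0Z_n(t+\theta)\,d\theta+BW(t),
\]
whose right-hand side is continuous in $t$, so $Z_n\in C^1$. Independently, a deterministic integration by parts of the It\^o integral $W^B_{G_n}(t)=\int_0^t G_n(t-s)B\,dW(s)$ against the integrand $G_n(t-s)B$, which is absolutely continuous in $s$ on $[0,t]$, gives $W^B_{G_n}(t)=BW(t)+\int_0^t G_n'(t-s)BW(s)\,ds=\frac{d}{dt}Z_n(t)$. Hence both asserted identities hold at level $n$.

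Next I would pass to the limit. Proposition~\ref{09/05/09(1213)} gives $G_n\to G$ strongly, uniformly on compacts; since $\{BW(s):s\in[0,T]\}$ is compact, this upgrades to $\sup_{t\le T}\|Z_n(t)-Z(t)\|_H\to0$ pathwise, whence $FZ_{n,t}\to FZ_t$ by dominated convergence. Proposition~\ref{10/05/09(2)} gives $\mathbb{E}\sup_{t\le T}\|W^B_{G_n}(t)-W^B_{G}(t)\|_H^p\to0$, so along a subsequence $W^B_{G_n}\to W^B_G$ a.s., uniformly in $t$. Rewriting the approximate identity as $A_nZ_n(t)=W^B_{G_n}(t)-FZ_{n,t}-BW(t)$ shows its right-hand side converges; writing $A_n=AJ_n$ with $J_n=nR(n,A)$, and using $\sup_n\|J_n\|<\infty$ and $J_n\to I$ strongly to obtain $J_nZ_n(t)\to Z(t)$, the closedness of $A$ forces $Z(t)\in\mathscr{D}(A)$ with $AZ(t)=W^B_G(t)-FZ_t-BW(t)$. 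This is exactly the spatial identity $W^B_G(t)=AZ(t)+B_1Z(t-r)+\int_{-r}^0 a(\theta)B_0Z(t+\theta)\,d\theta+BW(t)$; and since $Z_n\to Z$ and $\frac{d}{dt}Z_n\to W^B_G$ uniformly, the standard theorem on uniform convergence of derivatives yields $\frac{d}{dt}Z=W^B_G$.

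Finally, for the regularity class I would observe that $\mathrm{Tr}[G(t)BQB^*G(t)^*]\le\|G(t)\|^2\,\mathrm{Tr}[BQB^*]$ is bounded on $[0,T]$ because $\mathrm{Tr}\,Q<\infty$ and $B\in\mathscr{L}_2(K_Q,H)$, so hypothesis (\ref{27/03/09(10)}) holds for any $\alpha\in(0,1/2)$ and Theorem~\ref{08/04/10(1)} furnishes a continuous version of $W^B_G=\frac{d}{dt}Z$; together with continuity of $AZ(t)=W^B_G(t)-FZ_t-BW(t)$ (each summand being continuous in $t$) this gives the strict-solution regularity meant by $Z\in C^1([0,\infty);\mathscr{D}(A))$. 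The main obstacle is precisely the membership $Z(t)\in\mathscr{D}(A)$: one cannot push $A$ through the convolution directly, and only the interplay of the strong convergence of $G_n$, the $L^p$-convergence of the stochastic convolutions from Proposition~\ref{10/05/09(2)}, and the closedness of $A$ via the factorization $A_n=AJ_n$ legitimizes $A_nZ_n(t)\to AZ(t)$. Here analyticity of $e^{tA}$ and the hypothesis $a\in L^q$, $q>2$, enter to guarantee the limiting delay terms are sufficiently regular for this identification to hold in the classical sense (alternatively, one could argue pathwise from the deterministic regularity results of the Appendix).
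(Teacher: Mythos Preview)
Your proof is correct and follows essentially the same route as the paper's: lift to the Yosida approximants $G_n$, establish the identity at level $n$, and pass to the limit via Propositions~\ref{09/05/09(1213)} and~\ref{10/05/09(2)} together with the closedness of $A$ through the factorization $A_n=AJ_n$. The paper organizes the $n$-level step slightly differently---it invokes the strong-solution theory of \cite{kl08(1)} to write $Z_n=\int_0^t W^B_{G_n}(s)\,ds$ first and then identifies this with the convolution $\int_0^t G_n(t-s)BW(s)\,ds$ via variation of constants, whereas you start from the convolution and obtain $Z_n'=W^B_{G_n}$ by direct differentiation plus It\^o integration by parts---but the substance and the limiting argument are identical.
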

\begin{proof} By virtue of Proposition \ref{10/05/09(2)}, we know that $W_{G_n}^B(t)\to W^B_G(t)$ almost surely as $n\to\infty$ uniformly with respect to $t$ on any bounded intervals. For any $n\in \rho(A)$, let us consider the following stochastic functional differential equation on the Hilbert space $H$, 
\begin{equation}
\label{03/05/09(1)}
\begin{cases}
dy(t) = A_n y(t)dt +B_1y(t-r)dt + \displaystyle\int^0_{-r} a(\theta)B_0y(t+\theta)d\theta +BdW(t),\,\,\,t\ge 0,\\
 y(t) = 0,\,\,\,\,\,\,-r \le t\le 0,
\end{cases}
\end{equation}
where $A_n$, $n\in \rho(A)$, are the Yosida approximants of $A$. 
According to Theorems 4.1 and 4.2 in \cite{kl08(1)}, it is known that there exists a strong solution for the equation (\ref{03/05/09(1)}). Moreover,  the strong solution is uniquely represented by 
\[ y(t) =W^B_{G_n}(t)= \int^t_0 G_n(t-s)BdW(s)\,\,\,\hbox{for}\,\,\,t\ge 0,\,\,\,\,\,n\in \rho(A),\]
and $y(t)=0$ if $t\in [-r, 0]$. 
Therefore, it follows that for any $t\ge 0$,
\begin{equation}
\label{03/05/09(2)}
\begin{split}
W^B_{G_n}(t) &= A_n \int^t_0 W^B_{G_n}(s)ds +\int^t_0 B_1W^B_{G_n}(s-r)ds\\
&\,\,\,\,\,\,\, + \int^t_0 \int^0_{-r} a(\theta)B_0W^B_{G_n}(s+\theta)d\theta ds + BW(t)\\
&= A_n\int^t_0 W^B_{G_n}(s)ds + B_1\int^{t-r}_0 W^B_{G_n}(s)ds + \int^0_{-r} a(\theta)B_0\int^{t+\theta}_0 W^B_{G_n}(s)ds d\theta +BW(t).
\end{split}
\end{equation}
Let $Z_n(t)= \int^t_0 W^B_{G_n}(s)ds$, $t\ge 0$, then the equality (\ref{03/05/09(2)}) yields that  for any $t\ge 0$,
\[
dZ_n(t) = A_n Z_n(t)dt +B_1Z_n(t-r)dt +\int^0_{-r} a(\theta)B_0Z_n(t+\theta)d\theta dt +BW(t)dt,\]
which immediately  implies that 
\begin{equation}
\label{10/05/09(30)}
Z_n(t) = \int^t_0 G_n(t-s)BW(s)ds,\hskip 15pt t\ge 0.
\end{equation}
In view of Proposition \ref{09/05/09(1213)} and (\ref{10/05/09(30)}), it is easy for ones to deduce  that almost surely
\begin{equation}
\label{10/05/09(200)}
\lim_{n\to\infty} Z_n(t) = \int^t_0 G(t-s)BW(s)ds = Z(t),\hskip 15pt t\ge 0,
\end{equation}
and for any $t\ge 0$,
\begin{equation}
\label{10/05/09(201)}
\lim_{n\to\infty}\Big(B_1Z_n(t-r) + \int^0_{-r} a(\theta)B_0Z_n(t+\theta)d\theta\Big) = B_1Z(t-r) + \int^0_{-r} a(\theta)B_0Z(t+\theta)d\theta.
\end{equation}
Therefore, the relations (\ref{03/05/09(2)}), (\ref{10/05/09(200)}) and (\ref{10/05/09(201)}) together imply that for all $t\ge 0$,
\[\lim_{n\to\infty} AJ_n Z_n(t) =
\lim_{n\to\infty} A_n Z_n(t)  = W^B_G(t)-B_1Z(t-r) -\int^0_{-r} a(\theta)B_0Z(t+\theta)d\theta -BW(t),\]
where $J_n= nR(n, A)$, $n\in \rho(A)$. Since  $J_nx \rightarrow x$ for any $x\in H$ as $n\to\infty$, it follows, in addition to (\ref{10/05/09(200)}), that for any $t\ge 0$,
\[
J_n Z_n(t) = J_n Z(t) +J_n(Z_n(t)- Z(t))\to Z(t)\,\,\,\,\,\hbox{as}\,\,\,n\to\infty.
\]
Due to the closedness of operator $A$, this  implies that $Z(t)\in {\mathscr D}(A)$ and 
\[
AZ(t) = W^B_G(t)-B_1Z(t-r) - \int^0_{-r} a(\theta)B_0Z(t+\theta)d\theta -BW(t),\hskip 20pt t\ge 0,\]
which is exactly the second equality in (\ref{21/04/09(1)}). Finally, as $Z(t)\in {\mathscr D}(A)$, $t\ge 0$, and it is known (cf. Prop. 3.2, \cite{kl08(1)}) that 
\[
\frac{d}{dt}G(t)h = AG(t)h + B_1G(t-r)h + \int^0_{-r} a(\theta)B_0G(t+\theta)d\theta\hskip 10pt\hbox{for any}\hskip 10pt h\in {\mathscr D}(A),\,\,\,\,t\ge 0,\]
we thus obtain the first equality in  (\ref{21/04/09(1)}),
\[
\frac{dZ(t)}{dt} = AZ(t) +B_1Z(t-r) + \int^0_{-r} a(\theta)B_0Z(t+\theta)d\theta +BW(t)\,\,\,\,\hbox{for any}\,\,\,\,t\ge 0.\]
The proof is now complete.
\end{proof}

Now we are ready to present the main results in this section. Recall that the domain ${\mathscr D}(A)\subset H$ is a Banach space under the graph norm $\|h\|_{{\mathscr D}(A)} := \|h\|_H + \|Ah\|_H$, $h\in {\mathscr D}(A)$. 
We also introduce two families of intermediate spaces between ${\mathscr D}(A)$ and $H$, depending on a parameter $\alpha\in (0, 1)$:
\[
{\mathscr D}_A(\alpha, \infty)=\big\{h\in H:\, \|h\|_\alpha :=\sup_{t>0}\|t^{1-\alpha}Ae^{tA}h\|_H<\infty\big\}\]
and 
\[
{\mathscr D}_A(\alpha) =\big\{h\in H:\, \lim_{t\to 0} t^{1-\alpha}Ae^{tA}h=0\big\}.\]
Obviously, it is true that  ${\mathscr D}(A)\subset {\mathscr D}_A(\alpha)\subset {\mathscr D}_A(\alpha, \infty)$. We also know (cf. \cite{pbhb67}, \cite{jlem72}) that ${\mathscr D}_A(\alpha, \infty)$ and ${\mathscr D}_A(\alpha)$ are Banach spaces under the norm $\|\cdot\|_H + \|\cdot\|_\alpha$.

For any $-\infty<a\le b<\infty$ and Banach space $X$ equipped with the norm $\|\cdot\|_X$, we introduce  Banach space
\[
C^\alpha([a, b]; X)= \Big\{u:\, [a, b]\to X;\,\, |u|_\alpha := \sup_{t,\,s\in [a, b],\,t\not=s}\frac{\|u(t)-u(s)\|_X}{|t-s|^\alpha}<\infty\Big\},\]
under the norm $\|u\|_{C^\alpha([a, b]; X)}=\|u\|_{C([a, b]; X)} +|u|_\alpha$, Banach space
\[
C^1([a, b]; X) = \{u:\, [a, b]\to X;\, u,\,u'\in C([a, b]; X)\}\]
under the norm $\|u\|_{C^1([a, b]; X)}=\|u\|_{C([a, b]; X)} +\|u'\|_{C([a, b]; X)}$, and  Banach space 
\[
C^{1, \alpha}([a, b]; X) = \{u:\, [a, b]\to X;\, u\in C^1([a, b]; X),\,u'\in C^\alpha([a, b]; X)\}\]
under the norm $\|u\|_{C^{1, \alpha}([a, b]; X)}=\|u\|_{C([a, b]; X)} +\|u'\|_{C^\alpha([a, b]; X)}$.

\begin{proposition} \label{12/04/10(10)}
Assume that $a(\cdot)$ in (\ref{09/04/10(1)}) belongs to $L^q([-r, 0]; {\mathbb R}^1)$ for some $q>2$.
Let $T\ge 0$ and $B\in {\mathscr L}_2(K_Q, H)$. Then 
\begin{enumerate}
\item[(i)] for any $\alpha\in (0, 1/2)$, the stochastic convolution $W_G^B(t)$ has $\alpha$-H\"older continuous trajectories on $[0, T]$;
\item[(ii)]
 for any $\alpha \in (0, 1/2)$ and $\gamma\in (0, \frac{1}{2}-\alpha)$, $W^B_G(t)\in {\mathscr D}((-A)^\gamma)$, $t\in [0, T]$, and the process $(-A)^\gamma W^B_G(\cdot)$ is $\alpha$-H\"older continuous. 
\end{enumerate}
\end{proposition}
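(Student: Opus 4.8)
The plan is to establish both assertions at once through the factorization identity already used in Theorem \ref{08/04/10(1)} and Proposition \ref{10/05/09(2)}, viewing (i) as the case $\gamma=0$ of (ii). Fix $\alpha\in(0,1/2)$ and $\gamma\in[0,\tfrac12-\alpha)$, and choose a factorization parameter $\beta$ with $\alpha+\gamma<\beta<\tfrac12$ together with an integrability exponent $p$ so large that $\tfrac1p<\beta-\alpha-\gamma$. Writing $\beta$ in place of $\alpha$ in (\ref{01/03/09(2002)}) gives
\[
W_G^B(t)=\frac{\sin\pi\beta}{\pi}\Big[\int_0^t(t-s)^{\beta-1}Y(t,s)\,ds+\int_0^t(t-s)^{\beta-1}G(t-s)Z(s)\,ds\Big],
\]
with $Y(t,s)$ and $Z(s)$ the Gaussian processes introduced in the proof of Proposition \ref{10/05/09(2)}.

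The probabilistic input comes first. Since $\mathrm{Tr}\,Q<\infty$ and $\|G(t)\|$ is bounded on $[0,T]$ by (\ref{10/02/09(4)}), the trace $\mathrm{Tr}\,[G(t)BQB^*G(t)^*]$ is bounded on $[0,T]$, so the key hypothesis (\ref{27/03/09(10)}) holds for every exponent below $1/2$ and in particular for $\beta$. Repeating the covariance computation (\ref{27/03/09(21)}) and invoking Corollary 2.17 in \cite{dpjz92} exactly as in Theorem \ref{08/04/10(1)}, I obtain $\mathbb E\sup_{t\in[0,T]}\int_0^T\|Y(t,s)\|_H^p\,ds<\infty$ and $\mathbb E\int_0^T\|Z(s)\|_H^p\,ds<\infty$; hence almost surely $Y(t,\cdot),Z(\cdot)\in L^p([0,T];H)$.

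The analytic input is to transfer the smoothing of $e^{tA}$ to $G$. Using the Volterra series $G(t)=\sum_{m\ge0}V^m e^{tA}$ of Proposition \ref{14/04/09(1)}, the analytic estimate $\|(-A)^\gamma e^{\tau A}\|\le C_\gamma\tau^{-\gamma}$, the boundedness of $F$ in (\ref{10/02/08(102)}) and the summability $\|V^m\|\le\kappa^m/m!$, I would prove for $\gamma\in[0,1)$ the bounds
\[
\|(-A)^\gamma G(\tau)\|\le C_\gamma\,\tau^{-\gamma}\quad\text{and}\quad \big\|(-A)^{-\gamma}\big(G(\tau+h)-G(\tau)\big)\big\|\le C_\gamma\,h^{\gamma}\qquad(\tau\in(0,T],\ h>0),
\]
the singularity being carried only by the $m=0$ summand $e^{\tau A}$, since every further application of $V$ convolves against $e^{\cdot A}$ and improves the exponent. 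These are precisely the analytic-semigroup-type estimates required to run the deterministic factorization regularity lemma collected in the Appendix: the operator $f\mapsto\int_0^t(t-s)^{\beta-1}G(t-s)f(s)\,ds$ then maps $L^p([0,T];H)$ boundedly into $C^\alpha([0,T];\mathscr D((-A)^\gamma))$ whenever $\alpha+\gamma<\beta-\tfrac1p$. Applied to the second term with $f=Z$ this yields the claim. For the first term I move $(-A)^\gamma$ onto the factor $G(t-s+\theta)$; since $G$ vanishes for negative argument, the $\theta$-averaging gives $\int_{-r}^0\|(-A)^\gamma G(t-s+\theta)\|\,d\theta\le C(t-s)^{1-\gamma}$, so this contribution is in fact less singular and is dominated by the same lemma. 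Summing the two pieces shows $W_G^B(\cdot)\in C^\alpha([0,T];\mathscr D((-A)^\gamma))$ almost surely, giving (ii), and (i) on setting $\gamma=0$.

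The main obstacle is exactly the non-analytic, indeed non-semigroup, nature of $G$: the smoothing and time-increment estimates above cannot be quoted for $G$ directly and must be bootstrapped from $e^{tA}$ through the Volterra series, after which the remaining work is routine exponent bookkeeping in the factorization together with the observation that the shifted kernel $G(t-s+\theta)$ in the first term is harmless after integration in $\theta$. Wherever the closed operator $(-A)^\gamma$ (or $A$ itself) must be moved under the stochastic integral, I would justify the step by first performing it for the Yosida-regularised convolution $W_{G_n}^B$, whose generator $A_n$ is bounded, and then passing to the limit through Proposition \ref{10/05/09(2)} and the closedness of $(-A)^\gamma$, just as in the proof of Lemma \ref{10/05/09(40)}.
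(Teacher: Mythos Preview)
Your route is genuinely different from the paper's, and it rests on a lemma that is \emph{not} in the Appendix. The Appendix does not contain a ``factorization regularity lemma'' of the type you invoke (mapping $L^p$ into $C^\alpha([0,T];\mathscr D((-A)^\gamma))$ via the kernel $(t-s)^{\beta-1}G(t-s)$). What the Appendix actually provides is Proposition~\ref{12/04/10(1)}: maximal H\"older regularity for the \emph{deterministic retarded parabolic problem} $Z'=AZ+B_1Z(\cdot-r)+\int a(\theta)B_0Z(\cdot+\theta)\,d\theta+f$ with $f\in C^\alpha$. The paper's proof exploits precisely this. Lemma~\ref{10/05/09(40)} gives the crucial pathwise identity $W_G^B(t)=Z'(t)$, where $Z$ solves that deterministic retarded equation with forcing $f=BW\in C^\alpha([0,T];H)$ a.s. Proposition~\ref{12/04/10(1)} then yields $Z\in C^{1,\alpha}([0,T];H)\cap C^\alpha([0,T];\mathscr D(A))$, which immediately gives (i). For (ii) one feeds this into the interpolation inclusion $C^{1,\alpha}\cap C^\alpha(\mathscr D(A))\subset C^{1,\alpha-\gamma}(\mathscr D_A(\gamma,\infty))$ and the embedding $\mathscr D_A(\gamma,\infty)\subset\mathscr D((-A)^{\gamma-\varepsilon})$. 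No analytic-type estimates for $G$ are ever established or needed.

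Your programme---bootstrapping the smoothing bounds $\|(-A)^\gamma G(\tau)\|\le C\tau^{-\gamma}$ from the Volterra series and then running a Da~Prato--Kwapie\'n--Zabczyk style factorization directly---is plausible in spirit, but it would require substantial new work outside the paper: proving those $G$-estimates rigorously (including the correct increment bound; your stated $(-A)^{-\gamma}$ estimate is the wrong direction for H\"older continuity \emph{in} $\mathscr D((-A)^\gamma)$), proving the $L^p\to C^\alpha(\mathscr D((-A)^\gamma))$ mapping property for the $G$-kernel, and handling the $t$-dependence of $Y(t,s)$ in the first factorization term, which is not a pure convolution. The paper sidesteps all of this by reducing to a deterministic problem with a H\"older forcing, at the cost of first proving Lemma~\ref{10/05/09(40)} via Yosida approximation---a step you mention only as a technical device for commuting operators, whereas in the paper it is the heart of the argument.
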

\begin{proof} 
Note that from Lemma \ref{10/05/09(40)}, we have that $W^B_G(t)= dZ(t)/dt$ almost surely where $Z(t)$ is the solution to the initial value problem
\begin{equation}
\label{10/05/09(31)}
\begin{cases}
dZ(t)/dt = AZ(t) +  B_1 Z(t-r) + \displaystyle\int^0_{-r}a(\theta)B_0Z(t+\theta) d\theta +BW(t),\,\,\,\,t\in [0, T],\\
Z(t) = 0,\,\,\,\,t\in [-r, 0].
\end{cases}
\end{equation}
It is known that $W(t)\in C^\alpha([0, T]; H)$ almost surely for $\alpha\in (0, 1/2)$ and thus the conclusion (i) follows easily.

To show (ii), first note (see, e.g., Lemma 1.1 in \cite{al87}) that there is the following inclusion relations
\begin{equation}
\label{10/05/09(42)}
C^{1, \alpha}([0, T]; H)\cap C^\alpha([0, T]; {\mathscr D}(A))\subset C^{1, \alpha-\gamma}([0, T]; {\mathscr D}_A(\gamma, \infty))
\end{equation}
for all $\alpha\in (0, 1)$ and $\gamma\in (0, \alpha)$.
Since $W(t)\in C^\alpha([0, T]; H)$ almost surely for any $\alpha\in (0, 1/2)$, we have by virtue of Proposition \ref{12/04/10(1)} and (\ref{10/05/09(42)}) that
\[
Z(t)\in C^{1, \alpha}([0, T]; H)\cap C^\alpha([0, T]; {\mathscr D}(A))\subset C^{1, \alpha-\gamma}([0, T]; {\mathscr D}_A(\gamma, \infty))\]
almost surely for all $\alpha\in (0, 1/2)$ and $\gamma\in (0, \alpha)$. Thus $W^B_G(t)\in C^{\alpha-\gamma}([0, T]; {\mathscr D}_A(\gamma, \infty))$. Since ${\mathscr D}_A(\gamma, \infty)$ is included in ${\mathscr D}((-A)^{\gamma-\varepsilon})$ (cf. Prop. A.13 in \cite{dpjz92}) for sufficiently small $\varepsilon>0$, the desired conclusion (ii) thus follows. The proof is  complete now.
\end{proof}

Combining Lemma \ref{12/04/10(10)} and Proposition \ref{12/04/10(1)}, we obtain the following regularity results for the solution of Eq. (\ref{19/11/07(11)}).
\begin{theorem}
Let $T\ge 0$ and $B\in {\mathscr L}_2(K_Q, H)$. 
Assume that $a(\cdot)$ in (\ref{09/04/10(1)}) belongs to $L^q([-r, 0]; {\mathbb R}^1)$ for some $q>2$, $\phi_0\in {\mathscr D}(A)$ and $\phi_1\in C^\alpha([-r, 0]; {\mathscr D}(A))$, $\alpha\in (0, 1/2)$, such that 
\[
A\phi_0 + B_1\phi_1(-r) + \int^0_{-r} a(\theta)B_0\phi_1(\theta)d\theta\in {\mathscr D}_A(\alpha, \infty).\]
Then the solution $y(t)$ of Eq. (\ref{19/11/07(11)}) has $\alpha$-H\"older continuous trajectories on $[0, T]$. Moreover, for all $\gamma \in (0, \frac{1}{2}-\alpha)$, the solution $y(t)\in {\mathscr D}((-A)^\gamma)$ and the process $(-A)^\gamma y(t)$ is $\alpha$-H\"older continuous on $t\in [0, T]$. 
\end{theorem}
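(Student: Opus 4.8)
The plan is to read off the solution from the variation of constants formula (\ref{22/03/07(5)}) and split it into its deterministic and stochastic components,
\[
y(t) = \underbrace{G(t)\phi_0 + \int^0_{-r} G(t+\theta)(S\phi_1)(\theta)\,d\theta}_{=:\,u(t)} + W^B_G(t),\qquad t\in[0,T],
\]
and to prove both assertions for each summand separately. Since the class of $\alpha$-H\"older $H$-valued paths and the class of $\alpha$-H\"older ${\mathscr D}((-A)^\gamma)$-valued paths are both linear, adding the two components at the end will yield (i) and (ii) for $y$.

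For the stochastic convolution $W^B_G(t)$ there is nothing new to do: part (i) of Proposition \ref{12/04/10(10)} already furnishes $\alpha$-H\"older continuous trajectories on $[0,T]$, and part (ii) gives $W^B_G(t)\in{\mathscr D}((-A)^\gamma)$ with $(-A)^\gamma W^B_G(\cdot)$ being $\alpha$-H\"older continuous for every $\gamma\in(0,\tfrac12-\alpha)$. It is precisely this term that forces the restriction $\gamma<\tfrac12-\alpha$.

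The deterministic term $u(t)$ is the mild solution of the delay problem (\ref{22/03/07(1234)}) with the operator $F$ of (\ref{09/04/10(1)}) and initial datum $(\phi_0,\phi_1)$. I would obtain its regularity from the deterministic maximal-regularity results of the Appendix, regarding $u$ as the solution of the non-delayed Cauchy problem $u'(t)=Au(t)+g(t)$, $u(0)=\phi_0$, with the delay contribution $g(t):=B_1u(t-r)+\int^0_{-r}a(\theta)B_0u(t+\theta)\,d\theta$ treated as a forcing term. The hypotheses of the theorem are tailored to this reduction: $\phi_1\in C^\alpha([-r,0];{\mathscr D}(A))$ together with $a(\cdot)\in L^q$, $q>2$, ensures that $g$ is $\alpha$-H\"older in time (the distributed-delay integral being controlled through the integrability of $a$), while
\[
A\phi_0+g(0)=A\phi_0+B_1\phi_1(-r)+\int^0_{-r}a(\theta)B_0\phi_1(\theta)\,d\theta\in{\mathscr D}_A(\alpha,\infty)
\]
is exactly the compatibility condition for analytic-semigroup maximal regularity at the initial time. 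Propagating this along $[0,r],[r,2r],\dots$ by the method of steps (the compatibility at each subsequent node being inherited from the maximal regularity already obtained on the preceding interval, since there $u'=Au+g$ takes values in ${\mathscr D}_A(\alpha,\infty)$), the Appendix results deliver $u\in C^\alpha([0,T];{\mathscr D}(A))\cap C^{1,\alpha}([0,T];H)$.

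From $u\in C^\alpha([0,T];{\mathscr D}(A))$ both assertions for $u$ are immediate: (i) holds because ${\mathscr D}(A)$ embeds continuously into $H$, and (ii) holds because for $\gamma<1$ the operator $(-A)^\gamma$ is bounded from ${\mathscr D}(A)$ (with the graph norm) into $H$ by the moment inequality, so $u(t)\in{\mathscr D}(A)\subset{\mathscr D}((-A)^\gamma)$ and $(-A)^\gamma u(\cdot)\in C^\alpha([0,T];H)$; note that for the deterministic part no restriction on $\gamma$ is needed, the constraint $\gamma<\tfrac12-\alpha$ coming solely from the noise. Adding the two parts then gives (i) and (ii) for $y$. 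I expect the genuine difficulty to lie entirely in the deterministic step, namely establishing $u\in C^\alpha([0,T];{\mathscr D}(A))$: this is where $\phi_0\in{\mathscr D}(A)$, $\phi_1\in C^\alpha([-r,0];{\mathscr D}(A))$ and the membership in ${\mathscr D}_A(\alpha,\infty)$ are all consumed, and the main obstacle is verifying that the compatibility condition propagates correctly through the delayed dynamics; the stochastic input and the final gluing are routine by comparison.
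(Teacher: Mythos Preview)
Your proposal is correct and follows essentially the same route as the paper: split $y=u+W^B_G$, invoke Proposition~\ref{12/04/10(10)} for the stochastic convolution, and invoke the Appendix result (Proposition~\ref{12/04/10(1)} with $f\equiv 0$) for the deterministic piece $u$, noting that $u\in C^\alpha([0,T];{\mathscr D}(A))$ already gives the ${\mathscr D}((-A)^\gamma)$ regularity for free. The only inaccuracy is your description of the step-by-step argument on $[0,r],[r,2r],\dots$: because the distributed delay $\int^0_{-r}a(\theta)B_0u(t+\theta)\,d\theta$ involves values of $u$ on the \emph{current} interval as well, a pure method-of-steps does not close and one needs the contraction on short intervals $[0,\delta]$ as in the Appendix---but since you ultimately cite that proposition, this does not affect the validity of your outline.
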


\section{Burkholder-Davis-Gundy Inequality}

In this section, we shall establish in Theorem  \ref{16/05/09(3)}  a retarded version of the well-known Burkholder-Davis-Gundy type of inequality for the stochastic convolution  $W_G^B(t) = \int^t_0 G(t-s)B(s)dW(s)$ for any process $B\in {\cal U}^2([0, T]; {\mathscr L}_2(K_Q, H))$, $T\ge 0$. It is worth pointing out that a similar inequality of this kind was established in \cite{kl08(1)} under the additional restriction that $A$ generates a pseudo contraction semigroup $e^{tA}$, $t\ge 0$, in the sense that  $\|e^{tA}\|\le e^{\alpha t}$ holds true for some constant $\alpha\in {\mathbb R}^1$ and all $t\ge 0$. By using a factorization method, it is possible for one to remove this restriction to establish a similar inequality. To see this, we first impose, motivated by Lemma \ref{12/04/2010(2)}, the following conditions as in  \cite{kl08(1)} on the delay opeartor $F$  in (\ref{10/02/08(1)}).

Let $1\le p<\infty$ and assume that the operator $F$  in (\ref{10/02/08(1)}) permits a bounded linear extension $F:\, L^{p}([-r, T]; H)\to L^{p}([0, T]; H)$ which is defined by $(Fy)(t)=Fy_t$, $y\in L^{p}([-r, T]; H)$, i.e.,  there exists a real number $M_p >0$ such that 
\begin{equation}
\label{10/02/08(1032)}
\int^T_0 \|(Fy)(t)\|^{p}_Hdt \le M_p \int^T_{-r} \|y(t)\|^{p}_Hdt\hskip 15pt \hbox{for any}\hskip 15pt y\in L^{p}([-r, T]; H).
\end{equation} 

As a consequence, it is valid that under the condition (\ref{10/02/08(1032)}), the structure operator $S$ introduced in  (\ref{28/06/06(101)}) (resp. $S$ in 
(\ref{27/06/06(302)})) can be extended, similarly to (\ref{12/04/10(3)}), to a linear and bounded operator from the space $L^{p}([-r, 0]; H)$ (resp. $L^{p}([-r, 0]; {\mathscr L}(H))$) into itself. Moreover, 
\begin{equation}
\label{12/04/10(303)}
\begin{split}
\int^0_{-r} \|S\varphi(\theta)\|^{p}_Hd\theta \le M_p \int^0_{-r} \|\varphi(\theta)\|^{p}_Hd\theta,\hskip 15pt \forall\, \varphi(\cdot)\in L^{p}([-r, 0]; H).
\end{split}
\end{equation}

\begin{theorem} \label{16/05/09(3)}
{\bf (Burkholder-Davis-Gundy Inequality)} 
Let $p>2$, $T\ge 0$ and $B(t)$ be an ${\mathscr L}_2(K_Q, H)$-valued, ${\mathscr F}_t$-adapted process such that 
\begin{equation}
\label{01/03/09(1)}
{\mathbb E}\int^T_0 Tr[B(t)QB(t)^*]^{p/2}dt<\infty.
\end{equation}
Suppose further that the inequality (\ref{10/02/08(1032)}) holds, then there exists a number $C=C(T)>0$ such that 
\begin{equation}
\label{09/05/09(2)}
{\mathbb E}\Big(\sup_{t\in [0, T]}\Big\|\int^t_0 G(t-s)B(s)dW(s)\Big\|^{p}_H\Big)\le C(T) {\mathbb E}\int^T_0 Tr[B(t)QB(t)^*]^{p/2}dt.
\end{equation}
\end{theorem}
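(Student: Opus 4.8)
The strategy is the factorization method, exactly as in the proof of Theorem \ref{08/04/10(1)}, but now tracking $L^p$-norms of a time-dependent integrand $B(s)$ rather than merely establishing continuity. Fix $\alpha\in(1/p,1/2)$ and use the elementary identity (\ref{27/03/09(1)}) to write
\[
W_G^B(t)=\frac{\sin\pi\alpha}{\pi}\int_0^t(t-s)^{\alpha-1}\int_0^s G(t-s+s-u)(s-u)^{-\alpha}B(u)dW(u)\,ds.
\]
Applying the stochastic Fubini theorem and the quasi-semigroup relation (\ref{27/06/06(300)}) decomposes $G(t-s+s-u)$ into the $G(t-s)G(s-u)$ piece plus the $S$-convolution piece, so that (up to the constant) $W_G^B(t)=I_1(t)+I_2(t)$ with $I_j(t)=\int_0^t(t-s)^{\alpha-1}Z_j(t,s)\,ds$, where $Z_2(s)=\int_0^s G(s-u)(s-u)^{-\alpha}B(u)dW(u)$ carries the factor $G(t-s)$ outside, and $Z_1(t,s)$ is the retarded term involving $SG$. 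The point of the factor $(s-u)^{-\alpha}$ with $\alpha<1/2$ is to keep these inner stochastic integrals well-defined with finite second moments.

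The key estimate is an $L^p$ Hölder bound on the outer $(t-s)^{\alpha-1}$ integral: since $\alpha>1/p$, the conjugate exponent $q=p/(p-1)$ satisfies $(\alpha-1)q>-1$, so $\int_0^T s^{(\alpha-1)q}ds<\infty$, giving
\[
\sup_{t\in[0,T]}\|I_j(t)\|_H^p\le C_{\alpha,p,T}\int_0^T\|Z_j(t,s)\|_H^p\,ds,
\]
with the $t$-dependence inside $Z_j$ handled using $\|G(t)\|\le c\,e^{\gamma t}\le C(T)$ on $[0,T]$. Taking expectations and invoking the scalar Burkholder--Davis--Gundy inequality (Corollary 2.17 or Lemma 7.7 in \cite{dpjz92}) converts ${\mathbb E}\|Z_j\|_H^p$ into an integral of $(\mathrm{Tr}[\,\cdot\,QB\,\cdot^*])^{p/2}$ against the weight $(s-u)^{-\alpha}$. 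For the $I_1$ term one additionally uses the boundedness (\ref{12/04/10(303)}) of the structure operator $S$ on $L^p$ and Hölder's inequality in $\theta$ over $[-r,0]$ to absorb the $\int_{-r}^0$-integral, at which point the uniform bound on $\|G\|$ reduces everything to $B(u)$.

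Assembling these, the remaining task is to collapse the double/triple time integrals by Fubini and the finiteness of $\int_0^T v^{-2\alpha}\,dv$ (again guaranteed by $\alpha<1/2$), so that the repeated $(s-u)^{-\alpha}$ weights integrate to a finite constant and one is left precisely with ${\mathbb E}\int_0^T\mathrm{Tr}[B(t)QB(t)^*]^{p/2}dt$ on the right, yielding (\ref{09/05/09(2)}). I expect the main obstacle to be the bookkeeping in the $I_1$ term: one must interchange the $du$, $d\theta$, and $ds$ integrations carefully, apply the scalar BDG inequality to the correct inner martingale, and verify that the exponent constraint $1/p<\alpha<1/2$ simultaneously validates the outer Hölder step, the inner stochastic integrability, and the convergence of the power-weight integrals — it is the compatibility of these three requirements (which forces $p>2$) that is the crux, whereas the $I_2$ term is a direct specialization of the same computation without the $S$-factor.
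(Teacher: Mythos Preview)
Your factorization strategy is exactly the paper's, and the decomposition $W_G^B=\tfrac{\sin\pi\alpha}{\pi}(I_1+I_2)$ via the quasi-semigroup relation (\ref{27/06/06(300)}) is identical. There is, however, one place where your bookkeeping is loose and where the paper proceeds differently.

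After applying the $H$-valued BDG inequality (Lemma~7.2 in \cite{dpjz92}) to $Z_j$, what you actually get is
\[
{\mathbb E}\|Z_j(\cdot,s)\|_H^{p}\;\le\; c_p\,{\mathbb E}\Big(\int_0^s (s-u)^{-2\alpha}\,\mathrm{Tr}[\,\cdots B(u)QB(u)^*\cdots\,]\,du\Big)^{p/2},
\]
with the $p/2$ power \emph{outside} the $du$-integral and weight $(s-u)^{-2\alpha}$, not $(s-u)^{-\alpha}$. ``Fubini and the finiteness of $\int_0^T v^{-2\alpha}dv$'' alone does not convert this into $\int_0^s(s-u)^{-?}\,\mathrm{Tr}[\cdot]^{p/2}\,du$; an additional convolution/H\"older step is required. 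The paper handles this by choosing $\alpha\in\big(0,\tfrac{p-2}{2p}\big)$ rather than your $\alpha\in(1/p,1/2)$: with this smaller $\alpha$ it uses Young's inequality $\|u*v\|_{L^p}\le T^{(p-1)/p}\|u\|_{L^1}\|v\|_{L^p}$ on the \emph{outer} $(t-s)^{\alpha-1}$ integral (needing only $\alpha>0$) and then H\"older with exponents $\tfrac{p}{p-2},\,\tfrac{p}{2}$ on the \emph{inner} $du$-integral (needing $2\alpha p/(p-2)<1$, i.e.\ $\alpha<\tfrac{p-2}{2p}$). Your scheme swaps the roles of H\"older and Young; it can be made to work, but note that if you attempted H\"older at the inner step as well, the resulting constraint $\alpha<\tfrac{p-2}{2p}$ would conflict with your outer constraint $\alpha>1/p$ whenever $2<p\le 4$. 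To stay in your range $\alpha\in(1/p,1/2)$ you must instead integrate in $s$ first and then apply Young's convolution inequality $\|g*f\|_{L^{p/2}}\le\|g\|_{L^1}\|f\|_{L^{p/2}}$ with $g(v)=v^{-2\alpha}$, which is precisely what delivers the condition $\alpha<1/2$ you quoted.
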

\begin{proof}
Let $\alpha\in (0, \frac{p-2}{2p})$ and it is known in (\ref{20/04/09(1)}) that  
 \begin{equation}
\label{01/03/09(2)}
\begin{split}
&W_G^B(t)\cr 
&=  \frac{\sin \pi\alpha}{\pi}\Big\{\int^t_0 (t-s)^{\alpha-1}\int^s_0 (s-u)^{-\alpha}\int^0_{-r} G(t-s+\theta)[SG(s-u+\cdot)](\theta) B(u)d\theta dW(u)ds\cr
&\,\,\,\,\,\,\,\, +\int^t_0 (t-s)^{\alpha-1}G(t-s)\int^s_0 G(s-u)(s-u)^{-\alpha}B(u)dW(u)ds\Big\}\cr
&= \frac{\sin \pi\alpha}{\pi}(I_1(t) +I_2(t)).
\end{split}
\end{equation}
Firstly,  let us estimate the term $I_1(t)$. To this end, we rewrite $I_1(t)$ as
\[
I_1(t) = \int^t_0 (t-s)^{\alpha-1}Z(t, s)ds,\hskip 20pt t\in [0, T],\]
where 
\[
Z(t,s) = \int^s_0 (s-u)^{-\alpha}\int^0_{-r}G(t-s+\theta)[SG(s-u+\cdot)](\theta)B(u)d\theta dW(u),\hskip 15pt s\in [0, T].\]
Recall the well-known Young inequality: for any $p>1$, 
 \begin{equation}
\label{01/03/09(30)}
\Big| \int^T_0 (u\ast v)(t)dt\Big|^p \le T^{p-1}\Big(\int^T_0 |u(t)|dt\Big)^p \int^T_0 |v(t)|^pdt
\end{equation}
holds for all $u\in L^1([0, T]; {\mathbb R}^1)$ and $v\in L^p([0, T]; {\mathbb R}^1)$ where $u\ast v$ is the convolution of the real-valued functions $u$ and $v$. Since $\alpha>0$,
it follows by virtue of  (\ref{01/03/09(30)}) that  
 \begin{equation}
\label{01/03/09(3025)}
{\mathbb E}\sup_{t\in [0, T]}\|I_1(t)\|^{p}_H \le C_{1, T} {\mathbb E}\sup_{t\in [0, T]}\int^T_0 \|Z(t, s)\|^{p}_H ds
\end{equation}
where 
\[
C_{1, T} = T^{p-1}\left(\int^T_0 s^{\alpha -1}ds\right)^p = \frac{1}{\alpha}T^{p+\alpha-1}>0.\]
Since $\|G(t)\|\le  C_{2, T}$, $t\in [0, T]$, for some number $C_{2, T}>0$, one  has that for any $t\in [0, T]$,
 \begin{equation}
\label{01/03/09(302)}
\begin{split}
\int^T_0 &\|Z(t, s)\|^{p}_Hds\\
 &= \int^T_0\Big\|\int^0_{-r} G(t-s+\theta)\int^s_0 (s-u)^{-\alpha}[SG(s-u+\cdot)](\theta)B(u)dW(u)d\theta\Big\|^{p}_Hds\\
&\le C^p_{2, T}\int^T_0\Big(\int^0_{-r}\Big\|\int^s_0 (s-u)^{-\alpha}[SG(s-u+\cdot)](\theta)B(u)dW(u)\Big\|_Hd\theta \Big)^{p}ds
\end{split}
\end{equation}
which, by H\"older inequality, immediately implies that
 \begin{equation}
\label{01/03/09(3026)}
\begin{split}
{\mathbb E}&\sup_{t\in [0, T]}\int^T_0\|Z(t, s)\|^{p}_Hds\\ &\le r^{1-1/p}C^p_{2, T}\int^T_0\int^0_{-r} {\mathbb E}\Big\|\int^s_{0} (s-u)^{-\alpha}[SG(s-u+\cdot)](\theta)B(u)dW(u)\Big\|^{p}_Hd\theta ds\\
& = r^{1-1/p}C^p_{2, T}\int^T_0 J(s)ds,
\end{split}
\end{equation}
where 
\[
J(s) = \int^0_{-r} {\mathbb E}\Big\|\int^s_{0} (s-u)^{-\alpha}[SG(s-u+\cdot)](\theta)B(u)dW(u)\Big\|^{p}_Hd\theta.\]
As for the term $J(\cdot)$, by using  Lemma 7.2 in \cite{dpjz92} and H\"older inequality we can obtain that for some number $C_{3, T}>0$ and any $s\in [0, T]$,
 \begin{equation}
\label{01/03/09(30268)}
\begin{split}
&J(s)\\
 & \le C_{3, T}\int^0_{-r} {\mathbb E}\Big( \int^s_0 (s-u)^{-2\alpha}Tr[SG(s-u+\cdot)(\theta)B(u)Q(SG(s-u+\cdot)(\theta)B(u))^*]du\Big)^{p/2}d\theta\\
&= C_{3, T}\int^0_{-r} \Big(\int^s_{0} (s-u)^{- \frac{2\alpha p}{p-2}}du\Big)^{\frac{p-2}{2}}\\
&\,\,\,\,\,\,\,\cdot  {\mathbb E} \Big( \int^s_0 Tr[SG(s-u+\cdot)(\theta)B(u)Q(SG(s-u+\cdot)(\theta)B(u))^*]^{p/2}du\Big)d\theta.
\end{split}
\end{equation}
Since $0<\alpha<\frac{p-2}{2p}$, it is easy to see that $1- \frac{2\alpha p}{p-2}>0$ and  let
\begin{equation}
\label{15/05/09(20)}
\begin{split}
C_{4, T} = \left(\int^T_0 (T-s)^{-\frac{2\alpha p}{p-2}}ds\right)^{\frac{p-2}{2}} = \left(\frac{p-2}{p-2 -2\alpha p}\cdot T^{1-\frac{2\alpha p}{p-2}}\right)^{\frac{p-2}{2}}<\infty.
\end{split}
\end{equation}
Then (\ref{12/04/10(303)}), (\ref{01/03/09(3026)}), (\ref{01/03/09(30268)}) and (\ref{15/05/09(20)}) together imply that
 \begin{equation}
\label{01/03/09(30278)}
\begin{split}
{\mathbb E}&\sup_{t\in [0, T]}\int^T_0\|Z(t, s)\|^{p}_Hds\\ & \le r^{1-1/p}C^p_{2, T}C_{3, T} C_{4, T}{\mathbb E}\int^T_0\int^0_{-r} \int^s_{0}  Tr[SG(s-u+\cdot)(\theta)B(u)\\
&\,\,\,\,\,\,\,\cdot Q(SG(s-u+\cdot)(\theta)B(u))^*]^{p/2}du d\theta ds\\
& \le r^{1-1/p}C^p_{2, T}C_{3, T} C_{4, T}\int^T_0\int^s_0 \int^0_{-r} {\mathbb E}\big\|[SG(s-u+\cdot)](\theta)\big\|^{p} Tr[B(u)QB(u)^*]^{p/2}d\theta du ds\\
&\le r^{1-1/p}C^p_{2, T}C_{3, T}C_{4, T}M_p\int^T_0 \int^s_0 \int^0_{-r} {\mathbb E}\|G(s-u+\theta)\|^{p}  Tr[B(u)QB(u)^*]^{p/2}d\theta duds\\
&\le  C_{5, T}{\mathbb E}\int^T_0 Tr[B(u)QB(u)^*]^{p/2}du<\infty,
\end{split}
\end{equation}
where $C_{5, T}= r^{1-1/p} C^{2p}_{2, T}C_{3, T} C_{4, T}M_p T>0$. Therefore, the relations (\ref{01/03/09(30278)}) and (\ref{01/03/09(3025)}) immediately yield that  
\begin{equation}
\label{19/04/09(10)}
{\mathbb E}\sup_{t\in [0, T]}\|I_1(t)\|^{p}_H \le  C_{1, T}C_{5, T}{\mathbb E}\int^T_0 Tr[B(u)QB(u)^*]^{p/2}du.
\end{equation}
In a similar way, we can show that there exists a real number $C_{6, T}>0$ such that the following inequality holds:
\begin{equation}
\label{19/04/09(11)}
{\mathbb E}\sup_{t\in [0, T]}\|I_2(t)\|^{p}_H \le C_{6, T}{\mathbb E}\int^T_0 Tr[B(u)QB(u)^*]^{p/2}du.
\end{equation}
The inequalities (\ref{19/04/09(10)}) and (\ref{19/04/09(11)}), in addition to  (\ref{01/03/09(2)}), imply the desired result (\ref{09/05/09(2)}). The proof is thus complete.
 \end{proof}

\section{Appendix}

In this appendix, we shall recall and establish some regularity properties for a class of  deterministic functional differential equations on the Hilbert space $H$. Firstly, consider the deterministic equation without time delays
\begin{equation}
\label{06/04/10(1)}
\begin{cases}
dy(t)/dt = Ay(t) + f(t),\hskip 15pt t\in [0, T],\hskip 10pt T\ge 0,\\
y(0) = \phi_0\in H,
\end{cases}
\end{equation} 
where $A$ generates an analytic semigroup $e^{tA}$, $t\ge 0$ and $f\in L^1([0, T]; H)$. The 
following regularity result is well established and its proofs are referred to the existing literature, e.g., \cite{jlem72} or \cite{ap83}.
\begin{proposition}
\label{07/04/01(10)}
Let $\alpha\in (0, 1)$ and suppose that 
\[
\phi_0\in {\mathscr D}(A),\hskip 15pt f\in C^\alpha([0, T]; H),\hskip 15pt A\phi_0 + f(0)\in {\mathscr D}_A(\alpha, \infty),\]
then the function
\[
y(t) = e^{tA}\phi_0 + \int^t_0 e^{(t-s)A}f(s)ds\in C^1([0, T]; H)\cap C([0, T]; {\mathscr D}(A)),\hskip 15pt t\in [0, T],\]
is the unique (classical) solution of  (\ref{06/04/10(1)}). Moreover, we have 
\begin{equation}
\label{12/04/10(20)}
y\in C^{1, \alpha}([0, T]; H)\cap C^\alpha([0, T]; {\mathscr D}(A)),
\end{equation}
and the estimate
\begin{equation}
\label{12/04/10(21)}
\begin{split}
\max\{\|y\|_{C^{1, \alpha}([0, T]; H)}, \|y\|_{C^{\alpha}([0, T]; {\mathscr D}(A))}\}
\le C\{\|f\|_{C^\alpha([0, T]; H)} + \|A\phi_0 +f(0)\|_{{\mathscr D}_A(\alpha, \infty)} + \|\phi_0\|_H\},
\end{split}
\end{equation}
where $C= C({\alpha, T})$ is some positive number.
\end{proposition}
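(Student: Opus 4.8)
The plan is to recognize this as the classical maximal Hölder (Schauder) regularity theorem for the abstract parabolic Cauchy problem, so the proof should rest only on the smoothing estimates of the analytic semigroup, namely $\|Ae^{tA}\|\le Ct^{-1}$ and $\|A^2e^{tA}\|\le Ct^{-2}$ for $t\in(0,T]$, together with $M:=\sup_{t\in[0,T]}\|e^{tA}\|<\infty$. First I would record the Duhamel representation $y(t)=e^{tA}\phi_0+v(t)$ with $v(t)=\int_0^t e^{(t-s)A}f(s)\,ds$, which is the unique mild solution; uniqueness as a classical solution is immediate, since any two classical solutions differ by a solution of the homogeneous equation with zero data, whose semigroup representation vanishes. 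The whole task then reduces to showing $Ay\in C^\alpha([0,T];H)$, because once this is known the equation itself gives $y'(t)=Ay(t)+f(t)\in C^\alpha([0,T];H)$.

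To locate the right cancellation I would add and subtract $f(t)$ inside the convolution, writing
\[
v(t)=\int_0^t e^{(t-s)A}[f(s)-f(t)]\,ds+\Big(\int_0^t e^{\sigma A}\,d\sigma\Big)f(t).
\]
Each summand lies in $\mathscr{D}(A)$: for the first, analyticity gives $e^{(t-s)A}[f(s)-f(t)]\in\mathscr{D}(A)$ with $\|Ae^{(t-s)A}[f(s)-f(t)]\|\le C[f]_\alpha (t-s)^{\alpha-1}$, which is integrable, so the closedness of $A$ applies; for the second, $A\int_0^t e^{\sigma A}\,d\sigma=e^{tA}-I$. Using $Ae^{tA}\phi_0=e^{tA}A\phi_0$ and regrouping via $f(t)=f(0)+[f(t)-f(0)]$ then yields
\[
Ay(t)=e^{tA}[A\phi_0+f(0)]+e^{tA}[f(t)-f(0)]-f(t)+\int_0^t Ae^{(t-s)A}[f(s)-f(t)]\,ds.
\]
This identity already explains the hypothesis: the leading term is $\alpha$-Hölder precisely because $A\phi_0+f(0)\in\mathscr{D}_A(\alpha,\infty)$, via $\|(e^{tA}-e^{\tau A})w\|\le C|t-\tau|^\alpha\|w\|_\alpha$, which follows from $\|e^{hA}w-w\|\le\alpha^{-1}h^\alpha\|w\|_\alpha$ and the boundedness of the semigroup. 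The terms $e^{tA}[f(t)-f(0)]$ and $-f(t)$ are $\alpha$-Hölder by $f\in C^\alpha$ and elementary semigroup estimates.

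The genuinely technical step, and the one I expect to be the main obstacle, is the $\alpha$-Hölder continuity in $t$ of the singular convolution $\Phi(t):=\int_0^t Ae^{(t-s)A}[f(s)-f(t)]\,ds$. For $0\le\tau<t\le T$ I would split $\Phi(t)-\Phi(\tau)$ into a near-diagonal piece $\int_\tau^t Ae^{(t-s)A}[f(s)-f(t)]\,ds$, a correction $\int_0^\tau Ae^{(t-s)A}[f(\tau)-f(t)]\,ds$, and a difference $\int_0^\tau [Ae^{(t-s)A}-Ae^{(\tau-s)A}][f(s)-f(\tau)]\,ds$. The first is bounded using $\int_\tau^t(t-s)^{\alpha-1}\,ds$; the second using $\|\int_0^\tau Ae^{(t-s)A}\,ds\|=\|e^{tA}-e^{(t-\tau)A}\|\le 2M$; and the last, which is the delicate one, by writing $Ae^{(t-s)A}-Ae^{(\tau-s)A}=\int_{\tau-s}^{t-s}A^2e^{\sigma A}\,d\sigma$ and invoking $\|A^2e^{\sigma A}\|\le C\sigma^{-2}$, so the integrand is controlled by $C[f]_\alpha(\tau-s)^{\alpha-1}(t-\tau)(t-s)^{-1}$. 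Each piece is then bounded by $C[f]_\alpha|t-\tau|^\alpha$ after routine integral estimates; this is exactly the kind of computation carried out in \cite{jlem72} and \cite{ap83}. This establishes $Ay\in C^\alpha([0,T];H)$, i.e. $y\in C^\alpha([0,T];\mathscr{D}(A))$.

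Finally, from $y'(t)=Ay(t)+f(t)$ I would conclude $y'\in C^\alpha([0,T];H)$ as a sum of two $\alpha$-Hölder maps, giving $y\in C^{1,\alpha}([0,T];H)$; since $\phi_0\in\mathscr{D}(A)$ makes $Ae^{tA}\phi_0=e^{tA}A\phi_0$ continuous, one gets in particular $y\in C^1([0,T];H)\cap C([0,T];\mathscr{D}(A))$, and a direct check of the initial condition identifies $y$ as the classical solution, proving (\ref{12/04/10(20)}). The estimate (\ref{12/04/10(21)}) then follows by carrying the explicit constants through the preceding inequalities: every bound above is linear in $\|f\|_{C^\alpha([0,T];H)}$, $\|A\phi_0+f(0)\|_{\mathscr{D}_A(\alpha,\infty)}$ and $\|\phi_0\|_H$ (the last entering only through $\|e^{\cdot A}\phi_0\|_{C([0,T];H)}\le M\|\phi_0\|_H$), with all constants depending solely on $\alpha$ and $T$.
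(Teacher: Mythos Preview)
Your proof sketch is correct and follows the standard Schauder-type argument for analytic semigroups, but you should note that the paper itself does not prove this proposition: it is stated as a well-established classical result, with the proof deferred to the references \cite{jlem72} and \cite{ap83}. What you have written is essentially the argument one finds in those sources (or in Lunardi's treatment), so there is nothing to compare on the level of method; your contribution is simply to supply the details the paper deliberately omits.
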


 The main objective in the appendix is to establish the existence and uniqueness of (classical) solutions for a class of linear functional differential equations (\ref{07/04/10(1)}) below. 

Let $T\ge 0$ and $r>0$ and $A$ generate an analytic semigroup $e^{tA}$, $t\ge 0$, on $H$ and $B_i\in {\mathscr L}(H)$, $i=0,\,1$. Suppose that  $f\in L^1([0, T]; H)$, $a\in L^1([-r, 0]; {\mathbb R}^1)$ and $\Phi=(\phi_0, \phi_1)\in {\cal H}$. Consider the following linear differential equation with time delays
\begin{equation}
\label{07/04/10(1)}
\begin{cases}
dy(t)/dt= Ay(t) + B_1y(t-r) + \displaystyle\int^0_{-r} a(\theta)B_0y(t+\theta)d\theta +f(t),\hskip 15pt 0\le t\le T,\\
y(0) =\phi_0,\,\,\,y(t)=\phi_1(t),\,\,\,t\in [-r, 0),
\end{cases}
\end{equation}

\begin{definition}
\rm A function $y: [-r, T]\to H$ which is differentiable on $[0, T]$ almost everywhere is called a  {\it (classical) solution\/} of the initial value problem (\ref{07/04/10(1)}) on $[-r, T]$ if $y\in C([0, T]; {\mathscr D}(A))\cap C^1([0, T]; H)$ and the equation  (\ref{07/04/10(1)}) is verified.
\end{definition}

\begin{proposition}
\label{12/04/10(1)}
Suppose  that $f\in C^\alpha([0, T]; H)$, $\phi_0\in {\mathscr D}(A)$ and $\phi_1\in C^\alpha([-r, 0]; {\mathscr D}(A))$ such that 
\[
A\phi_0 + B_1\phi_1(-r) + \int^0_{-r} a(\theta)B_0\phi_1(\theta)d\theta + f(0)\in {\mathscr D}_A(\alpha, \infty),\]
then there is a unique (classical) solution $y$ of the problem (\ref{07/04/10(1)}) with 
\[
y\in C^\alpha([0, T]; {\mathscr D}(A))\cap C^{1, \alpha}([0, T]; H).\]
Moreover, there exists a number $C = C(T, \alpha, a, B_0)>0$ such that 
\begin{equation}
\label{12/04/10(70)}
\begin{split}
\max\{\|y\|_{C^\alpha([0, T]; {\mathscr D}(A))}, &\, \|y\|_{C^{1, \alpha}([0, T]; H)}\}
\le C\Big(\|f\|_{C^\alpha([0, T]; H)} + \|\phi_1\|_{C^\alpha([-r, 0]; {\mathscr D}(A))} + \|\phi_0\|_H\\
&\,\,\,\,\, + \Big\|A\phi_0 + B_1\phi_1(-r) + \int^0_{-r} a(\theta)B_0\phi_1(\theta)d\theta + f(0)\Big\|_{{\mathscr D}_A(\alpha, \infty)}\Big).
\end{split}
\end{equation}
\end{proposition}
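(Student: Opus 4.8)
The plan is to recast the delayed problem (\ref{07/04/10(1)}) as a fixed-point problem for the non-delayed problem of Proposition \ref{07/04/01(10)}, treating the two memory terms as part of the inhomogeneity. Extend a candidate $y$ to $[-r,T]$ by $y(t)=\phi_1(t)$ on $[-r,0)$, which is continuous at $0$ under the natural compatibility $\phi_1(0)=\phi_0$, and set
\[
g[y](t)=B_1y(t-r)+\int^0_{-r}a(\theta)B_0y(t+\theta)\,d\theta+f(t),\qquad t\in[0,T].
\]
Define $\Gamma(y)$ to be the unique solution furnished by Proposition \ref{07/04/01(10)} of $z'=Az+g[y]$, $z(0)=\phi_0$. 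A classical solution of (\ref{07/04/10(1)}) is precisely a fixed point of $\Gamma$ in the complete affine set $Y_T:=\{y\in C^\alpha([0,T];{\mathscr D}(A))\cap C^{1,\alpha}([0,T];H):\,y(0)=\phi_0\}$.

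First I would verify that $\Gamma$ maps $Y_T$ into itself via Proposition \ref{07/04/01(10)}. Since $B_0,B_1\in{\mathscr L}(H)$ and $a\in L^1([-r,0];{\mathbb R}^1)$, for $y\in Y_T$ one estimates $\|g[y](t)-g[y](s)\|_H\le(\|B_1\|+\|B_0\|\|a\|_{L^1})\,|y|_\alpha\,|t-s|^\alpha+|f|_\alpha|t-s|^\alpha$, so that $g[y]\in C^\alpha([0,T];H)$; the junction at $0$ between $\phi_1$ and $y$ is controlled by $\phi_1\in C^\alpha$ and $y(0)=\phi_0$. Crucially, $g[y](0)=B_1\phi_1(-r)+\int^0_{-r}a(\theta)B_0\phi_1(\theta)\,d\theta+f(0)$ does not involve the restriction of $y$ to $(0,T]$, so the compatibility element $A\phi_0+g[y](0)$ is exactly the one assumed to lie in ${\mathscr D}_A(\alpha,\infty)$. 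Hence Proposition \ref{07/04/01(10)} applies and $\Gamma(y)\in Y_T$ with the estimate (\ref{12/04/10(21)}).

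The heart of the argument is contraction. The map $\Gamma$ is causal: $g[y](t)$ depends only on $y|_{[-r,t]}$, and $\Gamma(y)(t)=e^{tA}\phi_0+\int_0^t e^{(t-s)A}g[y](s)\,ds$ then depends only on $y|_{[0,t]}$. For $y_1,y_2\in Y_T$ the difference $\Gamma(y_1)-\Gamma(y_2)$ solves the non-delayed problem with zero initial datum and forcing $g[y_1]-g[y_2]$, whose value at $t=0$ vanishes (both use the same history); its compatibility element is therefore $0\in{\mathscr D}_A(\alpha,\infty)$, and Proposition \ref{07/04/01(10)} gives $\|\Gamma(y_1)-\Gamma(y_2)\|_{Y_T}\le C\|g[y_1]-g[y_2]\|_{C^\alpha([0,T];H)}$. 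I would then march in steps of length $\delta<r$ over $[0,\delta],[\delta,2\delta],\dots$: on each step the term $B_1(y_1-y_2)(\cdot-r)$ drops out of the difference (its argument lies in the already-fixed region), while the distributed term is governed by $\int_{-\delta}^0|a|$, which is small by absolute continuity of the Lebesgue integral, so $\Gamma$ becomes a strict contraction on each short subinterval once $\delta$ is small. This is precisely the factorial/Volterra mechanism already exploited for the operator $V$ in Lemma \ref{08/12/08(3)}. Restarting at each node $k\delta$ is legitimate because $y(k\delta)\in{\mathscr D}(A)$ and the fresh compatibility element equals $y'(k\delta)$, which lies in ${\mathscr D}_A(\alpha,\infty)$ by the smoothing of the analytic semigroup for $t>0$ (the maximal-regularity theory of \cite{al87}, \cite{jlem72}, \cite{ap83}). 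Banach's fixed-point theorem on each of the finitely many subintervals produces the unique fixed point, i.e.\ the unique classical solution $y\in Y_T$; global uniqueness follows since any two classical solutions differ by a fixed point of the homogeneous ($f=0$, zero data) map, forced to $0$ by the same contraction. Chaining (\ref{12/04/10(21)}) through the finitely many steps and absorbing the step constants yields the linear bound (\ref{12/04/10(70)}) with $C=C(T,\alpha,a,B_0)$.

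I expect the main obstacle to be the contraction in the H\"older/maximal-regularity norm rather than in a plain sup-norm: the seminorm $|\cdot|_\alpha$ is global and does not decompose cleanly over subintervals, so the distributed-delay operator with $t$-dependent integration limits must be controlled in $C^\alpha$ and not merely in $C$. Making the step argument rigorous therefore requires both a regularity constant in Proposition \ref{07/04/01(10)} that is uniform (or improving) as $\delta\to0$ and a careful propagation of the trace-space compatibility $y'(k\delta)\in{\mathscr D}_A(\alpha,\infty)$ at every restart; verifying these two points is where the real work lies.
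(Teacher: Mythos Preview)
Your proposal is correct and follows essentially the same approach as the paper: recast (\ref{07/04/10(1)}) as a fixed-point problem via the non-delayed maximal regularity result of Proposition \ref{07/04/01(10)}, work on subintervals of length $\delta<r$ so that the discrete delay $B_1y(\cdot-r)$ cancels in the contraction difference, and make the distributed term small through $\|a\|_{L^1([-\delta,0];{\mathbb R}^1)}\to 0$. The only noteworthy simplification in the paper is that it runs the fixed-point argument in $E=\{\bar y\in C^\alpha([0,\delta];{\mathscr D}(A)):\bar y(0)=\phi_0\}$ alone rather than in your intersection space $Y_T$, since membership in $C^{1,\alpha}([0,\delta];H)$ follows automatically from (\ref{12/04/10(20)}) once the fixed point is found; your concerns about restart compatibility and uniformity of the constant in $\delta$ are legitimate technicalities that the paper leaves implicit in the phrase ``repeating the above arguments.''
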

\begin{proof}
For any $\delta>0$, consider the following closed subset of $C^\alpha([0, \delta]; {\mathscr D}(A))$: 
\[
E = \{\bar y\in C^\alpha([0, \delta]; {\mathscr D}(A)):\,\, \bar y(0)=\phi_0\}.\]
W can associate to each $\bar y\in E$ a function $y\in C^\alpha([-r, \delta]; {\mathscr D}(A))$ by
\begin{equation}
\label{12/04/10(4)}
y(t) =\begin{cases}
\phi_1(t),&\hskip 20pt -r\le t<0,\\
\bar y(t),&\hskip 20pt t\in [0, \delta],
\end{cases}
\end{equation}
and define the mapping $\Xi$ for any $t\in [0, \delta]$ by 
\[
(\Xi\bar y)(t) = e^{tA}\phi_0 + \int^t_0 e^{(t-s)A}\Big[B_1\phi_1(s-r) + \int^0_{-r} a(\theta)B_0 y(s+\theta)d\theta +f(s)\Big]ds.\]
As $\phi_1\in C^\alpha([-r, 0]; {\mathscr D}(A))$ and $B_1\in {\mathscr L}(H)$, it is immediate that  $B_1\phi_1(\cdot-r)\in C^\alpha([0, \delta]; H)$. On the other hand, for any $y\in C^\alpha([-r, \delta]; {\mathscr D}(A))$, it is easy to see that $\int^0_{-r} a(\theta)B_0 y(s+\theta)d\theta\in C^\alpha([0, \delta]; H)$, $s\in [0, \delta]$, and moreover
\begin{equation}
\label{14/04/10(70)}
\Big\|\int^0_{-r} a(\theta)B_0 y(\cdot+\theta)d\theta\Big\|_{C^\alpha([0, \delta]; H)}\le \|B_0\|\|a\|_{L^1([-r, 0]; {\mathbb R}^1)}\|y\|_{C^\alpha([-r, \delta]; {\mathscr D}(A))}.
\end{equation}
This means, in addition to $f\in C^\alpha([0, \delta]; H)$,  that the function 
\begin{equation}
\label{12/04/10(31)}
B_1\phi_1(\cdot-r) + \int^0_{-r} a(\theta)B_0 y(\cdot+\theta)d\theta +f(\cdot)\in C^\alpha([0, \delta]; H).
\end{equation}
 Taking into account the fact that $\phi_0\in {\mathscr D}(A)$ and 
\[
A\phi_0 + B_1\phi_1(-r) + \int^0_{-r} a(\theta)B_0 y(\theta)d\theta +f(0)\in {\mathscr D}_A(\alpha, \infty),\]
we deduce from Proposition \ref{07/04/01(10)} that there exists a solution of the equation (\ref{07/04/10(1)}) in $[0, \delta]$ if and only if there exists an element $\bar y\in E$ such that 
\[
\Xi\bar y =\bar y,\]
and in this case it is true that $y\in C^{1, \alpha}([0, \delta]; H)$ by virtue of (\ref{12/04/10(20)}). 

Next we shall prove that the map $\Xi$ is a contraction on $E$ for properly chosen $\delta>0$. To this end, for any $\bar y_i\in E$ define $y_i$, $i=1,\,2$, according to  (\ref{12/04/10(4)}). Then we have for $t\in [0, \delta]$ that 
\begin{equation}
\label{13/04/10(1)}
\Xi\bar y_1(t) - \Xi\bar y_2(t) = \int^t_0 e^{(t-s)A}\int^0_{-r} a(\theta)B_0(y_1(s+\theta)-y_2(s+\theta))d\theta ds.
\end{equation}
By analogy with (\ref{14/04/10(70)}), it is not difficult to see that
\[
\int^0_{-r} a(\theta)B_0(y_1(\cdot +\theta)-y_2(\cdot +\theta))d\theta\in C^\alpha([0, \delta]; H),\]
and thus applying (\ref{12/04/10(21)}) to (\ref{13/04/10(1)}) yields that
\begin{equation}
\label{12/04/10(26)}
\|\Xi\bar y_1 -\Xi\bar y_2\|_{C^\alpha([0, \delta]; {\mathscr D}(A)}\le C\Big\|\int^0_{-r} a(\theta)B_0(y_1(\cdot+\theta)-y_2(\cdot + \theta))d\theta\Big\|_{C^\alpha([0, \delta]; H)}
\end{equation}
where $C>0$ is the constant given in (\ref{12/04/10(21)}). On the other hand, let $\delta\in (0, r)$ and as $y_1(t)-y_2(t)=0$ for $t\in [-r, 0]$, it is easy to see that 
\begin{equation}
\label{12/04/10(75)}
\Big\|\int^0_{-r} a(\theta)B_0(y_1(\cdot+\theta)-y_2(\cdot+\theta))d\theta\Big\|_{C^\alpha([0, \delta]; H)}\le \|B_0\|\|a\|_{L^1([-\delta, 0]; {\mathbb R}^1)}\|y_1-y_2\|_{C^\alpha([0, \delta]; {\mathscr D}(A))},
\end{equation}
which, in addition to  (\ref{12/04/10(26)}), immediately yields that 
\[
\|\Xi\bar y_1 -\Xi\bar y_2\|_{C^\alpha([0, \delta]; {\mathscr D}(A))}\le C\|B_0\|\|a\|_{L^1([-\delta, 0]; {\mathbb R}^1)} \|\bar y_1-\bar y_2\|_{C^\alpha([0, \delta]; {\mathscr D}(A))}.
\]
This implies that $\Xi$ is a contraction if we choose $\delta>0$ small enough ($\delta<r$) such that 
\[
 C\|B_0\|\|a\|_{L^1([-\delta, 0]; {\mathbb R}^1)} <1,\]
and thus guarantee the existence of a unique solution $y\in C^\alpha([0, \delta]; {\mathscr D}(A))$ of (\ref{07/04/10(1)}) in $[0, \delta]$.

Next we shall show the estimate (\ref{12/04/10(70)}). Firstly, it is easy to see that for $t\in [0, \delta]$, there is 
\[
\bar y(t) = e^{tA}\phi_0 + \int^t_0 e^{(t-s)A} \Big[B_1\phi_1(s-r) +\int^0_{-r} a(\theta)B_0y(s+\theta)d\theta +f(s)\Big]ds,\]
and so, by virtue of (\ref{12/04/10(21)}), it follows that 
\begin{equation}
\label{12/04/10(80)}
\begin{split}
\|\bar y\|_{C^\alpha([0, \delta]; {\mathscr D}(A))}&\le C\Big\{\|A\phi_1\|_{C^\alpha([-r, \delta-r]; H)} + \Big\|\int^0_{-r} a(\theta)B_0 y(\cdot +\theta)d\theta\Big\|_{C^\alpha([0, \delta]; H)} + \|\phi_0\|_H\\
&\,\,\,\,\,\, + \|f\|_{C^\alpha([0, \delta]; H)} + \Big\|A\phi_0 + B_1\phi_1(-r) +\int^0_{-r} a(\theta)B_0\phi_1(\theta)d\theta +f(0)\Big\|_{{\mathscr D}_A(\alpha, \infty)}\Big\}.
\end{split}
\end{equation}
Note that we have the estimates
\begin{equation}
\label{12/04/10(81)}
\|A\phi_1\|_{C^\alpha([-r, \delta-r]; H)} \le \|\phi_1\|_{C^\alpha([-r, 0]; {\mathscr D}(A))},
\end{equation} 
and 
\begin{equation}
\label{12/04/10(83)}
\begin{split}
\Big\|\int^0_{-r} a(\theta)B_0 y(\cdot+\theta)\Big\|_{C^\alpha([0, \delta]; H)} \le &\,\,\|B_0\|\Big\{
\|a\|_{L^1([-r, 0]; {\mathbb R}^1)}\|\phi_1\|_{C^\alpha([-r, 0]; {\mathscr D}(A))}\\
&\,\, + \|a\|_{L^1([-\delta, 0]; {\mathbb R}^1)}\|\bar y\|_{C^\alpha([0, \delta]; {\mathscr D}(A))}\Big\}.
\end{split}
\end{equation}
Thus, by substituting (\ref{12/04/10(81)}) and (\ref{12/04/10(83)}) into   (\ref{12/04/10(80)}), we obtain that 
\begin{equation}
\label{12/04/10(84)}
\begin{split}
\|\bar y\|_{C^\alpha([0, \delta]; {\mathscr D}(A))} &\le \frac{C(1+ \|B_0\|\|a\|_{L^1([-r, 0]; 
{\mathbb R}^1)})}{1-C\|B_0\|\|a\|_{L^1([-\delta, 0]; {\mathbb R}^1)}}\Big\{\|f\|_{C^\alpha([0, \delta]; H)} + 
\|\phi_1\|_{C^\alpha([0, \delta]; {\mathscr D}(A))} + \|\phi_0\|_H\\
&\,\,\,\,\, + \Big\|A\phi_0 + B_1\phi_1(-r) + \int^0_{-r} a(\theta)B_0\phi_1(\theta)d\theta + f(0)\Big\|_{{\mathscr D}_A(\alpha, \infty)}\Big\}.
\end{split}
\end{equation}
On the other hand, let us observe that 
\begin{equation}
\label{12/04/10(86)}
\begin{split}
\|\bar y&\|_{C^{1, \alpha}([0, \delta]; H)} = \|\bar y\|_{C([0, \delta]; H)} + \|\bar y'\|_{C^\alpha([0, \delta]; H)}\\
&\le \|\bar y\|_{C^\alpha([0, \delta]; {\mathscr D}(A))} + \Big\|A\bar y(\cdot) + B_1\phi_1(\cdot -r) + \int^0_{-r} a(\theta)B_0y(\cdot +\theta)d\theta + f(\cdot)\Big\|_{C^\alpha([0, \delta]; H)}\\
& \le 2\|\bar y\|_{C^\alpha([0, \delta]; {\mathscr D}(A))} + \Big\|B_1\phi_1(\cdot -r) + \int^0_{-r} a(\theta)B_0y(\cdot +\theta)d\theta + f(\cdot)\Big\|_{C^\alpha([0, \delta]; H)}.
\end{split}
\end{equation}
However, we know from (\ref{12/04/10(83)}) that  
\begin{equation}
\label{12/04/10(88)}
\begin{split}
\Big\|B_1\phi_1(\cdot -r) + &\,\int^0_{-r} a(\theta)B_0y(\cdot +\theta)d\theta + f(\cdot)\Big\|_{C^\alpha([0, \delta]; H)}\\
&\le \|B_1\|\|\phi_1\|_{C^\alpha([0, \delta]; {\mathscr D}(A))} + \|B_0\|\Big\{\|a\|_{L^1([-r, 0]; {\mathbb R}^1)} \|\phi_1\|_{C^\alpha([0, \delta]; {\mathscr D}(A))}\\
&\,\,\,\,\,\,\,\,\, + \|a\|_{L^1([-\delta, 0]; {\mathbb R}^1)} \|\bar y\|_{C^\alpha([0, \delta]; {\mathscr D}(A))}\Big\} + \|f\|_{C^\alpha([0, \delta]; H)}.
\end{split}
\end{equation}
Hence, (\ref{12/04/10(84)}), (\ref{12/04/10(86)}) and (\ref{12/04/10(88)}) together yield the desired result (\ref{12/04/10(70)}) on $[0, \delta]$.

The existence of a solution on the whole interval $[0, T]$ and the associated relation  (\ref{12/04/10(70)}) can be similarly established by repeating the above arguments on $[\delta, 2\delta]$, $[2\delta, 3\delta]$, $\cdots$, and the proof is thus complete.
\end{proof}

\vskip 20pt
\noindent {\bf \Large Acknowledgement}
\vskip 20pt
\noindent The author wishes to  thank  the  referee and associate editor for making some helpful suggestions, especially those in association with the proofs in Proposition 5.1, to improve the presentation of this paper. Also, the author gratefully acknowledges some financial supports from NSFC Research Grant, Ref. 10971041.


\begin{thebibliography}{17}
\bibitem{acm78} A. Chojnowska-Michalik. Representation theorem for general stochastic delay equations. {\it Bull. Acad. Pol. Sci. Ser. Sci. Math.} {\bf 26}, (1978), 634--641.

\bibitem{acm87} A. Chojnowska-Michalik. On processes of Ornstein-Uhlenbeck type in Hilbert space. {\it Stochastics.} {\bf 21}, (1987), 251--287.

\bibitem{pbhb67} P. Butzer and H. Berens. {\it Semi--Groups of Operators and Approximation.} Springer-Verlag, New York, (1967).

\bibitem{dpgkjz87} G. Da Prato, S. Kwapien and J. Zabczyk. Regularity of solutions of linear stochastic equations in Hilbert spaces. {\it Stochastics.} {\bf 23}, (1987), 1--23.

\bibitem{dpjz92} G. Da Prato and J. Zabczyk. {\it Stochastic Equations in Infinite Dimensions.} Cambridge University Press, (1992).

\bibitem{gdbkkes84} G. Di Blasio, K. Kunisch and E. Sinestrari. $L^2$--regularity for parabolic partial integrodifferential equations with delay in the highest-order derivatives. {\it J. Math. Anal. Appl.} {\bf 102}, (1984), 38--57.

\bibitem{kern00} K. Engel and R. Nagel. {\it One--Parameter Semigroups for Linear Evolution Equations}. Graduate Texts in Mathematics, {\bf 194}, Springer-Verlag, New York, Berlin, (2000).

\bibitem{jlem72} J. Lions and E. Magenes. {\it Non--Homogeneous Boundary Value Problems and Applications.} I, Springer-Verlag, Berlin, New York, (1972).

\bibitem{kl08(1)} K. Liu. Stochastic retarded evolution equations: Green operators, convolutions and solutions. {\it Stoch. Anal. Appl.} {\bf 26}, (2008), 624--650.

\bibitem{kl08(2)} K. Liu. Stationary solutions of retarded Ornstein-Uhlenbeck processes in Hilbert spaces. {\it Statist. Probab. Letts.} {\bf 78}, (2008), 1775--1783.

\bibitem{kl09(1)} K. Liu. The fundamental solution and its role in the optimal control of infinite dimensional neutral systems. {\it Applied Math. Optim.} {\bf 60}, (2009), 1--38.

\bibitem{kl09(2)} K. Liu. Retarded stationary Ornstein-Uhlenbeck processes driven by L\'evy noise and operator self-decomposability. {\it Potential Anal.} {\bf 33}, (2010), 291--312.

\bibitem{kl09(4)} K. Liu. A criterion for stationary solutions of retarded linear equations with additive noise. To appear in: {\bf 29}, {\it Stoch. Anal. Appl.} (2011).

\bibitem{al87} A. Lunardi. On the evolution operator for abstract parabolic equations. {\it Israel J. Math.} {\bf 60}, (1987), 281--314.

\bibitem{sn86} S. Nakagiri. Optimal control of linear retarded systems in Banach spaces. {\it J. Math. Anal. Appl.} {\bf 120}, (1986), 169--210.

\bibitem{ap83} A. Pazy. {\it Semigroups of Linear Operators and Applications to Partial Differential Equations}. Springer-Verlag, New York, (1983).



\bibitem{ht92} H. Tanabe. Fundamental solutions for linear retarded functional differential equations in Banach space. {\it Funkcial. Ekvac.} {\bf 35}, (1992), 149--177.





\end{thebibliography}
\end{document}